\title{Verified eigenvalue and eigenvector computations\\using complex moments and the Rayleigh--Ritz procedure\\for generalized Hermitian eigenvalue problems\thanks{This work is partially supported by the Japan Society for the Promotion of Science grants JP17K12690, JP18H03250, JP18K13453, JP19KK0255, JP20K14356, and JP21H03451.}}
\author{Akira Imakura\thanks{Faculty of Engineering, Information and Systems, University of Tsukuba, 1-1-1 Tennodai, Tsukuba, Ibaraki 305-8573 Japan} \and Keiichi Morikuni\footnotemark[2] \footnotemark[3]\thanks{\texttt{morikuni.keiichi.fw@u.tsukuba.ac.jp}} \and Akitoshi Takayasu\footnotemark[2]}
\date{}
\theoremstyle{plain}
\newtheorem{theorem}{Theorem}[section]
\newtheorem{remark}{Remark}[section]
\newtheorem{lemma}{Lemma}[section]
\numberwithin{equation}{section}
\begin{document}
\maketitle

\begin{abstract}
We propose a verified computation method for eigenvalues in a region and the corresponding eigenvectors of generalized Hermitian eigenvalue problems.
The proposed method uses complex moments to extract the eigencomponents of interest from a random matrix and uses the Rayleigh--Ritz procedure to project a given eigenvalue problem into a reduced eigenvalue problem.
The complex moment is given by contour integral and approximated using numerical quadrature.
We split the error in the complex moment into the truncation error of the quadrature and rounding errors and evaluate each.
This idea for error evaluation inherits our previous Hankel matrix approach, whereas the proposed method enables verification of eigenvectors and requires half the number of quadrature points for the previous approach to reduce the truncation error to the same order.
Moreover, the Rayleigh--Ritz procedure approach forms a transformation matrix that enables verification of the eigenvectors.
Numerical experiments show that the proposed method is faster than previous methods while maintaining verification performance and works even for nearly singular matrix pencils and in the presence of multiple and nearly multiple eigenvalues.
\end{abstract}

\textbf{Keywords}: 	Generalized eigenvalue problem, verified numerical computations, Rayleigh--Ritz procedure, complex moment, Hermitian matrix\\[3mm]

\textbf{2010 MSC}: 65F15, 65G20, 65G50

\section{Introduction}
We consider verifying the $t$ eigenvalues $\lambda_i$, counting multiplicity, in a prescribed interval $\Omega = [a, b] \subset \mathbb{R}$ of the generalized Hermitian eigenvalue problem 
\begin{equation}
	A \boldsymbol{x}_i = \lambda_i B \boldsymbol{x}_i, \quad \boldsymbol{x}_i \in \mathbb{C}^n \setminus \{ \boldsymbol{0} \}, \quad \lambda_1 \leq \lambda_2 \leq \cdots \leq \lambda_t,
	\label{eq:gevp}
\end{equation}
where $A = A^\mathsf{H} \in \mathbb{C}^{n \times n}$, $B = B^\mathsf{H} \in \mathbb{C}^{n \times n}$ is positive semidefinite, and the matrix pencil $zB-A$ ($z\in\mathbb{C}$) is regular, i.e, $\det (zB-A)$ is not identically equal to zero for all $z \in \mathbb{C}$; otherwise, it is singular.
We call $\lambda_i$ an eigenvalue and $\boldsymbol{x}_i$ the corresponding eigenvector of the problem \eqref{eq:gevp} or matrix pencil $z B - A$, $z \in \mathbb{C}$ interchangeably and the pair$(\lambda_i, \boldsymbol{x}_i)$ an eigenpair.
Here, the verification of eigenvalues and eigenvectors is to produce rigorous error bounds of numerically computed eigenvalues and eigenvectors, taking into account all possible errors, in particular rounding errors~\cite{Nagatou2003BJSIAM, Rump2010AN}.
Throughout, we assume that the number of eigenvalues in the interval $\Omega$ is known to be $t$ and there do not exist eigenvalues of \eqref{eq:gevp} at the end points $a$, $b \in \mathbb{R}$.
We also denote the eigenvalues of \eqref{eq:gevp} outside $\Omega$ by $\lambda_i$ ($i = t+1, t+2, \dots, r$), where $r = \mathrm{rank}\,B$.
Verified eigenvalue computations arise in applications, e.g., from the numerical verification of a priori error estimations for finite element solutions~\cite{YamamotoNakao1993NumerMath,WatanabeYamamotoNakao1999TJSIAM}, eigenvalues of elliptic operators~\cite{ToyonagaNakaoWatanabe2002JCAM}, and electronic state calculations~\cite{HoshiOgitaOzakiTerao2020JCAM}.

Previous studies of verified eigenvalue and eigenvector computations are classified into two categories: one is for the verification of specific eigenpairs, and the other is for the verification of all the eigenpairs at once.
This study focuses on the former category.

We first review methods in the former category.
Methods in the former have taken several different approaches and typically use fixed-point iterations.
An interval-arithmetic friendly formulation suitable for applying fixed-point iterations can lead to rigorous error bounds.
Yamamoto~\cite{Yamamoto1980NumerMath} and Rump~\cite{Rump1989} regard a given eigenvalue problem as a system of nonlinear equations and use Newton-like iterations for solving the equations~\cite{Krawczyk1969} to verify an eigenpair for nonsymmetric standard and non-Hermitian generalized eigenvalue problems, respectively.
Yamamoto~\cite{Yamamoto1982NumerMath} further introduces a refinement procedure~\cite{SymmWilkinson1980NumerMath}.
Behnke~\cite{Behnke1988,Behnke1991} uses Temple quotients, variational principles, and a generalization of Lehmann's method~\cite{Lekmann1949} for the inclusion of eigenvalues.
An extension of Rump's approach can deal with multiple or nearly multiple eigenvalues and eigenvectors~\cite{Rump2001LAA}.
See \cite[Section~13.4]{Rump2010AN} for a substantial review along this line.
Yamamoto~\cite{Yamamoto2001LAA} uses the $LDL^\mathsf{T}$ and Cholesky decompositions, its error estimation, and Sylvester's law of inertia and develops a method that can also deal with multiple eigenvalues or a cluster of eigenvalues of generalized symmetric eigenvalue problems.

Next, we review methods in the latter category.
Wilkinson~\cite{Wilkinson1961} and Varah~\cite{Varah1968MathComp} use Ger{\v s}hgorin's theorem~\cite{Gershgorin1931} for non-Hermitian matrices.
Oishi~\cite{Oishi2001LAA} uses Bauer--Fike-type and Weyl-type perturbation theorems~\cite{BauerFike1960,HornJohnson2013}.
Maruyama et al.~\cite{MaruyamaOgitaNakayaOishi2004} use Ger{\v s}hgorin's theorem, regards a given eigenvalue problem as a matrix equation, and uses a componentwise error bound~\cite{Yamamoto1984JJAM} and efficient technique~\cite{OishiRump2002NumerMath} for the matrix equation to verify all eigenvalues of generalized symmetric eigenvalue problems.
Miyajima et al.~\cite{MiyajimaOgitaRumpOishi2010} combined techniques developed for symmetric matrices in \cite{MiyajimaOgitaOishi2005TJSIAM, MiyajimaOgitaOishi2005, MiyajimaOgitaOishi2006TJSIAM} with Rump and Wilkinson's bounds to verify all eigenpairs of generalized symmetric eigenvalue problems.
Miyajima~\cite{Miyajima2012JCAM} improves his previous work for non-Hermitian $A$ and nonsingular non-Hermitian positive definite~$B$.
Miyajima~\cite{Miyajima2014SIMAX} uses Brouwer's fixed-point theorem for the enclosure of the eigenvalues and invariant subspaces of generalized non-Hermitian eigenvalue problems.

Our previous study proposes a verification method using complex moments~\cite{ImakuraMorikuniTakayasu2019JCAM}.
This method is based on an eigensolver~\cite{SakuraiSugiura2003JCAM}, which reduces a given generalized Hermitian eigenvalue problem into another generalized eigenvalue problem with block Hankel matrices, and evaluates all the errors in the reduction for verification.
We call this method the Hankel matrix approach throughout.
The errors are split into truncation errors in numerical quadrature and rounding errors.
To evaluate the truncation error, an interval arithmetic-friendly formula is derived.
This method is feasible even when $B$ is singular.
Also, we develop an efficient technique to validate the solutions of linear systems of equations corresponding to each quadrature point.

This study improves its truncation error using the Rayleigh--Ritz procedure~\cite{SakuraiTadano2007,IkegamiSakurai2010} and halves the number of quadrature points required by the Hankel matrix approach to satisfy a prescribed quadrature error.
This Rayleigh--Ritz procedure approach inherits features of the Hankel matrix approach, such as the efficient error evaluation technique for linear systems and the parameter tuning technique.
This approach is also feasible for singular $B$ when verifying eigenvalues and enables verifying eigenvectors.
Moreover, as this approach relies on the verification method for multiple and nearly multiple eigenvalues~\cite{Rump2001LAA}, it can deal with those eigenvalues in the interval~$\Omega$.
Numerical experiments prove the feasibility of this concept and show the efficiency and verification performance of the proposed method.

This paper is organized as follows.
Section~\ref{sec:proposed} presents the proposed method, derives computable error bounds for complex moments to justify it, and discusses implementation issues.
Section~\ref{sec:nuex} presents experimental results to illustrate the performance of the proposed method.
Section~\ref{sec:conc} concludes the paper.

\section{Rayleigh--Ritz procedure approach} \label{sec:proposed}
The Rayleigh--Ritz procedure projects a given eigenvalue problem into an (approximated) eigenspace of interest.
We develop a verified computation method for generalized Hermitian eigenvalue problems using the Rayleigh-Ritz procedure.
To this end, we first review a projection method using the Rayleigh--Ritz procedure and complex moments~\cite{SakuraiTadano2007,IkegamiSakurai2010}.

Define the $k$th complex moment matrix by
\begin{equation}
	M_k = \frac{1}{2 \pi \mathrm{i}} \oint_\Gamma \left(z-\gamma\right)^k (z B - A)^{-1} \mathrm{d} z, \quad k = 0, 1, 2, \dots, m-1	\label{eq:moment}
\end{equation}
on a positively oriented closed Jordan curve $\Gamma$ through the end points of the interval~$\Omega = [a, b]$, where $\mathrm{i}=\sqrt{-1}$ is the imaginary unit, $\pi$ is the circle ratio, and $\gamma \in \mathbb{R}$ is a shift parameter.
Then, using the matrix
\begin{equation}
	S = [S_0, S_1, \dots, S_{m-1}], \quad S_k = M_k B V , \quad k = 0, 1, 2, \dots, m - 1,
	\label{eq:Sk}
\end{equation}
where $V \in \mathbb{C}^{n \times \ell}$ is a random matrix, we transform the eigenvalue problem~\eqref{eq:gevp} into a reduced eigenvalue problem
\begin{align}
	S^\mathsf{H} ( A - \gamma B) S \boldsymbol{y} = (\lambda - \gamma) S^\mathsf{H} B S \boldsymbol{y}, \quad \boldsymbol{x} = S \boldsymbol{y}, \quad \boldsymbol{y} \in \mathbb{C}^n \setminus \lbrace \boldsymbol{0} \rbrace,
	\label{eq:RR}
\end{align}
where $\gamma \in \mathbb{R}$ is a shift parameter.
By solving the transformed generalized eigenvalue problem~\eqref{eq:RR}, we obtain the eigenvalues of interest under certain conditions.

We then show the identity between the Rayleigh--Ritz procedure approach and the Hankel matrix approach~\cite{SakuraiSugiura2003JCAM}.
To this end, we rewrite the coefficient matrices of \eqref{eq:RR} below.
Recall the Weierstrass canonical form of the matrix pencil $z B - A$ \cite[Proposition~7.8.3]{Bernstein2018}.
There exists a nonsingular matrix $X \in \mathbb{C}^{n \times n}$ such that
\begin{align}
	X^\mathsf{H} (zB - A) X = z \mathrm{I}_\mathrm{o} - \Lambda,
\end{align}
where the $i$th column of $X$ is the eigenvector~$\boldsymbol{x}_i$ corresponding to the eigenvalue~$\lambda_i$, $\mathrm{I}_\mathrm{o} = \mathrm{I}_r \oplus \mathrm{O} \in \mathbb{R}^{n \times n}$, and $\Lambda = \mathrm{diag}(\lambda_1, \lambda_2, \dots, \lambda_r) \oplus \mathrm{I}_{n - r} \in \mathbb{R}^{n \times n}$ whose leading $r$ diagonal entries are the eigenvalues of \eqref{eq:gevp}.
Here, $\mathrm{I}_t \in \mathbb{R}^{t \times t}$ is the identity matrix and $\oplus$ denotes the direct sum of matrices.
With this canonical form and the eigendecomposition
\begin{align}
	(z B - A)^{-1} 
	& = X (z \mathrm{I}_\mathrm{o} - \Lambda)^{-1} X^\mathsf{H} \\
	& = \sum_{i=1}^r (z - \lambda_i)^{-1} \boldsymbol{x}_i {\boldsymbol{x}_i}^\mathsf{H},
\end{align}
Caucy's integral formula gives the $k$th order complex moment
\begin{align}
	M_k & = \sum_{i=1}^r \left[ \frac{1}{2 \pi \mathrm{i}} \oint_\Gamma (z - \gamma)^k (z - \lambda_i)^{-1} \mathrm{d} z \right] \boldsymbol{x}_i {\boldsymbol{x}_i}^\mathsf{H} \\
	& = \sum_{i=1}^t (\lambda_i - \gamma)^k \boldsymbol{x}_i {\boldsymbol{x}_i}^\mathsf{H} \\
	& = X_\Omega {( \Lambda_{\Omega} - \gamma \mathrm{I}_t )}^k X_\Omega^\mathsf{H}
\end{align}
for $k = 0$, $1$, $\dots$, $m-1$, where $X_\Omega = \left[ \boldsymbol{x}_1, \boldsymbol{x}_2, \dots, \boldsymbol{x}_t \right]$ and $\Lambda_\Omega = \mathrm{diag} \left( \lambda_1, \lambda_2, \dots, \lambda_t \right)$.
Hence, we rewrite the coefficient matrices of \eqref{eq:RR} as
\begin{align*}
	{S_i}^\mathsf{H} (A - \gamma B) S_j
	& = V^\mathsf{H} B X_\Omega (\Lambda_\Omega - \gamma \mathrm{I}_t)^i [ X_\Omega^\mathsf{H} (A - \gamma B) X_\Omega ] {( \Lambda_{\Omega} - \gamma \mathrm{I}_t )}^j X_\Omega^\mathsf{H} B V \\
	& = V^\mathsf{H} B X_\Omega (\Lambda_\Omega - \gamma \mathrm{I}_t)^{i+j+1} X_\Omega^\mathsf{H} B V
\end{align*}
and
\begin{align*}
	{S_i}^\mathsf{H} B S_j
	& = V^\mathsf{H} B X_\Omega (\Lambda_\Omega - \gamma \mathrm{I}_t)^i ({X_\Omega}^\mathsf{H} B X_\Omega) (\Lambda_\Omega - \gamma \mathrm{I}_t)^j X_\Omega B V \\
	& = V^\mathsf{H} B X_\Omega (\Lambda_\Omega - \gamma \mathrm{I}_t)^{i+j} X_\Omega B V 
\end{align*}
for $i, j = 0$, $1$, $\dots$, $m-1$.
Here, we used the identity~${X_\Omega}^\mathsf{H} B X_\Omega = \mathrm{I}_t$, in which the eigenvectors~$\boldsymbol{x}_1$, $\boldsymbol{x}_2$, $\dots$, $\boldsymbol{x}_t$ are $B$-orthonormal.
Let $\mathsf{M}_k = V^\mathsf{H} B M_k B V$ be the reduced $k$th complex moment given in \cite[equation~(2)]{ImakuraMorikuniTakayasu2019JCAM}.
Then, the identities
\begin{align}
	{S_i}^\mathsf{H} (A - \gamma B) S_j = \mathsf{M}_{i+j+1}, \quad {S_i}^\mathsf{H} B S_j = \mathsf{M}_{i+j}
	\label{eq:SM}
\end{align}
for $i$, $j = 0$, $1$, $\dots$, $m-1$, or
\begin{align}
S^\mathsf{H} (A - \gamma B) S & = 
\begin{bmatrix}
\mathsf{M}_1 & \mathsf{M}_2 & \cdots & \mathsf{M}_m \\
\mathsf{M}_2 & \mathsf{M}_3 &  & \mathsf{M}_{m+1} \\
\vdots &  & \ddots & \vdots \\
\mathsf{M}_m & \mathsf{M}_{m+1} & \cdots & \mathsf{M}_{2m-1}
\end{bmatrix}, \\
S^\mathsf{H} B S & = 
\begin{bmatrix}
	\mathsf{M}_0 & \mathsf{M}_1 & \cdots & \mathsf{M}_{m-1} \\
	\mathsf{M}_1 & \mathsf{M}_2 &  & \mathsf{M}_m \\
	\vdots &  & \ddots & \vdots \\
	\mathsf{M}_{m-1} & \mathsf{M}_m & \cdots &\mathsf{M}_{2m-2}
\end{bmatrix}
\label{eq:HMsHM}
\end{align}
show that the Rayleigh--Ritz procedure and Hankel matrix approaches reduce the generalized eigenvalue problems~\eqref{eq:gevp} into the same eigenvalue problem with block Hankel matrices.
The left-hand sides of \eqref{eq:SM} form the transformed matrices in the Rayleigh--Ritz procedure approach, whereas the right-hand sides of \eqref{eq:SM} form the transformed matrices in the Hankel matrix approach.
We call these two approaches the complex moment approaches.
Further, the following theorem justifies that these methods determine the eigenvalues and eigenvectors of \eqref{eq:gevp}.

\begin{theorem}[{\cite[Theorem~7]{IkegamiSakuraiNagashima2010}, \cite[Theorem~3]{ImakuraDuSakurai2016JJIAM}}] \label{th:RR}
	Let $t$ be the number of eigenvalues of \eqref{eq:gevp} in the region~$\Omega$ and $S \in \mathbb{C}^{n \times \ell m}$ be defined as in \eqref{eq:Sk}, and assume $\mathrm{rank} S = t$.
	Then, the eigenvalues of the regular part of the matrix pencil $S^\mathsf{H} (A - z B) S$ are the same as the eigenvalues~$\lambda_i$ of \eqref{eq:gevp}, $i = 1$, $2$, $\dots$, $t$.
	Let $\boldsymbol{u}_i$ be the eigenvector corresponding to the eigenvalue~$\lambda_i$ of $S^\mathsf{H} (A - z B) S$.
	Then, $\boldsymbol{x}_i = S \boldsymbol{u}_i$ is the eigenvector corresponding to the eigenvalue~$\lambda_i$ of \eqref{eq:gevp}.
\end{theorem}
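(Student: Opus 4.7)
The plan is to exploit the explicit representation $M_k = X_\Omega (\Lambda_\Omega - \gamma \mathrm{I}_m)^k X_\Omega^{\mathsf{H}}$ already derived in the excerpt to factor $S$ as $X_\Omega W$ with a short-and-fat matrix $W$, and then to reduce the projected pencil $S^{\mathsf{H}}(A - zB)S$ by congruence to the diagonal pencil $\Lambda_\Omega - z\mathrm{I}_m$. Both the eigenvalues and the eigenvectors of the regular part then follow transparently from the diagonal pencil.

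First, I would substitute the representation of $M_k$ into the definition $S_k = M_k B V$ to obtain $S_k = X_\Omega (\Lambda_\Omega - \gamma \mathrm{I}_m)^k X_\Omega^{\mathsf{H}} B V$, and hence the factorization $S = X_\Omega W$ with $W = [X_\Omega^{\mathsf{H}} B V,\, (\Lambda_\Omega - \gamma \mathrm{I}_m) X_\Omega^{\mathsf{H}} B V,\, \dots,\, (\Lambda_\Omega - \gamma \mathrm{I}_m)^{M-1} X_\Omega^{\mathsf{H}} B V] \in \mathbb{C}^{m \times L}$. Since the columns of $X_\Omega$ are linearly independent (they are the first $m$ columns of the nonsingular transformation from the Weierstrass form) and $\mathrm{rank}\,S = m$ by hypothesis, $W$ must have full row rank $m$. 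Using the identities $X_\Omega^{\mathsf{H}} B X_\Omega = \mathrm{I}_m$ and $X_\Omega^{\mathsf{H}} A X_\Omega = \Lambda_\Omega$ read off from the Weierstrass form (and already invoked in deriving \eqref{eq:SM}), a direct computation gives $S^{\mathsf{H}}(A - zB) S = W^{\mathsf{H}} (\Lambda_\Omega - z\mathrm{I}_m) W$.

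The full-row-rank property of $W$ does the rest. Because $W^{\mathsf{H}}$ has trivial kernel, the equation $S^{\mathsf{H}}(A - \lambda B) S \boldsymbol{u} = \boldsymbol{0}$ is equivalent to $(\Lambda_\Omega - \lambda \mathrm{I}_m) W \boldsymbol{u} = \boldsymbol{0}$; conversely, since $W$ admits a right inverse, every coordinate vector $\boldsymbol{e}_i \in \mathbb{C}^m$ equals $W\boldsymbol{u}$ for some $\boldsymbol{u}$, so each $\lambda_i$ is realized as an eigenvalue of the projected pencil. For such an eigenpair $(\lambda_i, \boldsymbol{u}_i)$, the vector $W\boldsymbol{u}_i$ lies in the $\lambda_i$-eigenspace of $\Lambda_\Omega$, so $S\boldsymbol{u}_i = X_\Omega W\boldsymbol{u}_i$ lies in the span of the eigenvectors of \eqref{eq:gevp} associated with $\lambda_i$, giving the asserted eigenvector identity $\boldsymbol{x}_i = S\boldsymbol{u}_i$.

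The main technical obstacle is giving a precise meaning to ``the regular part'' when $L > m$, since the $L \times L$ pencil $S^{\mathsf{H}}(A - zB)S$ is itself singular (each coefficient matrix has rank at most $m$, so its determinant vanishes identically). The cleanest remedy, I expect, is to introduce a thin SVD $W = Q \Sigma \widetilde{V}^{\mathsf{H}}$ with $Q,\Sigma \in \mathbb{C}^{m \times m}$ and $\widetilde{V} \in \mathbb{C}^{L \times m}$ having orthonormal columns, and then to compress the pencil to the $m \times m$ regular pencil $\Sigma Q^{\mathsf{H}}(\Lambda_\Omega - z\mathrm{I}_m) Q \Sigma$, whose determinant is a nonzero scalar multiple of $\det(\Lambda_\Omega - z\mathrm{I}_m) = \prod_{i=1}^{m}(\lambda_i - z)$; this pins down exactly the $m$ finite eigenvalues $\lambda_1,\dots,\lambda_m$. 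A secondary subtlety is repeated eigenvalues in $\Omega$: when $\lambda_i = \lambda_{i+1}$, the vector $W\boldsymbol{u}_i$ need not be a single coordinate vector, so the conclusion should be read as an equality of eigenspaces together with the recovery map $\boldsymbol{x} = S\boldsymbol{u}$.
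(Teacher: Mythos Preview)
Your argument is correct and is the standard route to this result: factor $S = X_\Omega W$, use the rank hypothesis to force $W$ to have full row rank, reduce the projected pencil by congruence to the diagonal pencil $\Lambda_\Omega - z\mathrm{I}_m$, and handle the singular part via an SVD compression. The treatment of the regular part and of repeated eigenvalues is appropriately careful.

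There is, however, nothing in the paper to compare it against: the theorem is quoted from \cite{IkegamiSakuraiNagashima2010,ImakuraDuSakurai2016JJIAM} and is not proved here. The computations immediately preceding the theorem statement---the formula $M_k = X_\Omega(\Lambda_\Omega - \gamma\mathrm{I}_m)^k X_\Omega^{\mathsf{H}}$ and the block-Hankel identities \eqref{eq:SM}---are exactly the ingredients you invoke, so your proof is fully consistent with the paper's setup and would serve as a self-contained replacement for the citation. One cosmetic point: the paper writes $S \in \mathbb{C}^{n \times L}$ in the theorem statement, but $S$ as defined in \eqref{eq:Sk} has $LM$ columns; your $W$ should accordingly lie in $\mathbb{C}^{m \times LM}$, not $\mathbb{C}^{m \times L}$.
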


Note that this theorem holds even in the presence of multiple eigenvalues.
\medskip

The difference between the Rayleigh--Ritz and Hankel matrix approaches arises when approximating the integral~\eqref{eq:moment} using numerical quadrature.
Next, we evaluate the error in the Rayleigh--Ritz procedure approach, similarly to the previous study for the Hankel matrix  approach~\cite[sections~2, 3]{ImakuraMorikuniTakayasu2019JCAM}.

\subsection{\texorpdfstring{$N$}{N}-point quadrature rule.}
The complex moment~\eqref{eq:moment} is approximated by using the $N$-point trapezoidal rule, taking a circle with center~$\gamma$ and radius~$\rho$ in the complex plane
\begin{align}
	\Gamma = \left\{ z \in \mathbb{C} | z = \gamma + \rho \mathrm{exp}(\mathrm{i} \theta), \theta \in \mathbb{R} \right\}, \quad \gamma = \frac{b + a}{2}, \quad \rho = \frac{b - a}{2}
	\label{eq:integral_domain}
\end{align}	
as the domain of integration~$\Gamma$.
It follows from the error analysis in \cite{MiyataDuSogabeYamamotoZhang2009TJSIAM} that the $N$-point trapezoidal rule with the equi-distributed quadrature points
\begin{align}
	z_j = \gamma + \rho \mathrm{exp}(\mathrm{i} \theta_j), \quad \theta_j = \frac{2j-1}{N} \pi, \quad j = 1, 2, \dots, N
	\label{eq:zj_thetaj}
\end{align}
approximates the complex moment~$M_k$ as 
\begin{align}
M_k \simeq M_k^{(N)} = \sum_{i=1}^r (\lambda_i - \gamma)^k d_i^{(N)} \boldsymbol{x}_i \boldsymbol{x}_i^\mathsf{H},
\end{align}
where
\begin{align}
	d_i^{(N)} = 
	\begin{cases}
		\displaystyle \frac{1}{1 - \left( \frac{\lambda_i - \gamma}{\rho} \right)^N}, & i = 1, 2, \dots, t, \\
		\displaystyle \frac{- \left( \frac{\rho}{\lambda_i - \gamma} \right)^N}{1 - \left( \frac{\rho}{\lambda_i - \gamma} \right)^N}, & i = t+1, t+2, \dots, r.
	\end{cases}
	\label{eq:diN}
\end{align}
The approximation~$M_k \simeq M_k^{(N)}$ is confirmed as $d_i^{(N)} \rightarrow 1$ for $i = 1$, $2$, $\dots$, $t$ and $d_i ^{(N)} \rightarrow 0$ for $i = t+1$, $t+2$, $\dots$, $r$ for $N \rightarrow \infty$.

\subsection{Effect of eigenvalues inside and outside \texorpdfstring{$\Omega$}{Omega}}

To see the effect of the eigenvalues inside and outside the interval~$\Omega$ on the quadrature errors and for notational convenience, we split the complex moment into two
\begin{align}
	M_k^{(N)} = M_{k, \mathrm{in}}^{(N)} + M_{k, \mathrm{out}}^{(N)},
\end{align}
where 
\begin{align}
M_{k, \mathrm{in}}^{(N)} 
& = X_\Omega (\Lambda_\Omega - \gamma \mathrm{I}_t)^k D_\Omega^{(N)} {X_\Omega}^\mathsf{H},\\
M_{k, \mathrm{out}}^{(N)} 
& = X_{\Omega^\mathrm{c}} (\Lambda_{\Omega^\mathrm{c}} - \gamma \mathrm{I}_{r-t})^k D_{\Omega^\mathrm{c}}^{(N)} \label{eq:MkNout} {X_{\Omega^\mathrm{c}}}^\mathsf{H}
\end{align}
are associated with the eigenvalues inside and outside the interval~$\Omega$, respectively, for $k = 0$, $1$, $\dots$, $m-1$.
Here, we used the notations
\begin{align}
	D_\Omega^{(N)} & = \mathrm{diag}(d_1^{(N)}, d_2^{(N)}, \dots, d_t^{(N)}), \\
	D_{\Omega^\mathrm{c}}^{(N)} & = \mathrm{diag}(d_{t+1}^{(N)}, d_{t+2}^{(N)}, \dots, d_r^{(N)}), \\
	X_{\Omega^\mathrm{c}} & = [\boldsymbol{x}_{t+1}, \boldsymbol{x}_{t+1}, \dots, \boldsymbol{x}_r], \\
	\Lambda_{\Omega^\mathrm{c}} & = \mathrm{diag} (\lambda_{t+1}, \lambda_{t+2}, \dots, \lambda_r).
\end{align}

With the above approximation~$M_k \simeq M_k^{(N)}$, $k = 0, 1, \dots, 2m-1$, we obtain the approximated transformation matrix
\begin{align}
S_k	\simeq S_k^{(N)}= M_k^{(N)} B V
\end{align}
and split it into two $S_k^{(N)} = S_{k, \mathrm{in}}^{(N)} + S_{k, \mathrm{out}}^{(N)}$, where 
\begin{align}
	S_{k, \mathrm{in}}^{(N)} & = M_{k, \mathrm{in}}^{(N)} B V, 	\label{eq:SkinN} \\
	S_{k, \mathrm{out}}^{(N)} & = M_{k, \mathrm{out}}^{(N)} B V \label{eq:SkoutN}
\end{align}
are associated with the eigenvalues inside and outside the region~$\Omega$, respectively.
With this approximated transformation matrix~$S_k^{(N)}$, the reduced complex moment~$\mathsf{M}_{i+j+1}$ is approximated as 
\begin{align}
	\mathsf{M}_{i+j+1} 
	& \simeq \mathsf{M}_{i+j+1}^{(N)} \\
	& = (S_i^{(N)})^\mathsf{H} (A - \gamma B) S_j^{(N)}.
	\label{eq:Mkl1N}
\end{align}
The approximated reduced complex moment is split into two
\begin{align}
\mathsf{M}_{i+j+1}^{(N)} = \mathsf{M}_{i+j+1, \mathrm{in}}^{(N)} + \mathsf{M}_{i+j+1, \mathrm{out}}^{(N)},
\label{eq:splitMkN}
\end{align}
where 
\begin{align}
	\mathsf{M}_{i+j+1, \mathrm{in}}^{(N)} 
	& = (S_{i, \mathrm{in}}^{(N)})^\mathsf{H} (A - \gamma B) S_{j, \mathrm{in}}^{(N)}, \\
	\mathsf{M}_{i+j+1, \mathrm{out}}^{(N)} \label{eq:MijinN}
	& = (S_{i, \mathrm{out}}^{(N)})^\mathsf{H} (A - \gamma B) S_{j, \mathrm{out}}^{(N)}
\end{align}
are associated with the eigenvalues inside and outside the region~$\Omega$, respectively, for $i$, $j = 0$, $1$, $\dots$, $m-1$.

Let $H_m^< = S^\mathsf{H} (A - \gamma B) S$ and $H_m = S^\mathsf{H} B S$ be the block Hankel matrices in \eqref{eq:HMsHM}.
Note that the block $(i, j)$ entries of $H_m^<$ and $H_m$ are $\mathsf{M}_{i+j+1}$ and $\mathsf{M}_{i+j}$, respectively.
Then, in the Rayleigh--Ritz procedure approach, they are approximated as
\begin{align}
	H_m^< & \simeq H_m^{<, (N)} = (S^{(N)})^\mathsf{H} (A - \gamma B) S^{(N)}, \\
	H_m & \simeq H_m^{(N)} = (S^{(N)})^\mathsf{H} B S^{(N)},
\end{align}
where $S^{(N)} = [S_0^{(N)}, S_1^{(N)}, \dots, S_{M-1}^{(N)}]$.
Here, the block $(i, j)$ entries of $H_m^{<, (N)}$ and $H_m^{(N)}$ are $\mathsf{M}_{i+j+1}^{(N)}$ and $\mathsf{M}_{i+j}^{(N)}$, respectively.
For convenience, we split the approximated block Hankel matrices into two
\begin{align}
H_m^{<, (N)} = H_{m, \mathrm{in}}^{<, (N)} + H_{m, \mathrm{out}}^{<, (N)}, \quad H_m^{(N)} = H_{m, \mathrm{in}}^{(N)} + H_{m, \mathrm{out}}^{(N)},	
\end{align}
where
\begin{align}
H_{m, \mathrm{in}}^{<, (N)} = (S_\mathrm{in}^{(N)})^\mathsf{H} (A - \gamma B) S_\mathrm{in}^{(N)}, \quad H_{m, \mathrm{out}}^{<, (N)} = (S_\mathrm{out}^{(N)})^\mathsf{H} (A - \gamma B) S_\mathrm{out}^{(N)}
\label{eq:H_insN}
\end{align}
and
\begin{align}
H_{m, \mathrm{in}}^{(N)} = (S_\mathrm{in}^{(N)})^\mathsf{H} B S_\mathrm{in}^{(N)}, \quad H_{m, \mathrm{out}}^{(N)} = (S_\mathrm{out}^{(N)})^\mathsf{H} B S_\mathrm{out}^{(N)}
\label{eq:H_inN}
\end{align}
are associated with the eigenvalues inside and outside the region~$\Omega$, respectively,
Here, the block $(i, j)$ entries of $H_{m, \mathrm{in}}^{<, (N)}$, $H_{m, \mathrm{out}}^{<, (N)}$, $H_{m, \mathrm{in}}^{(N)}$, and $H_{m, \mathrm{out}}^{(N)}$ are $\mathsf{M}_{i+j+1, \mathrm{in}}^{(N)}$, $\mathsf{M}_{i+j+1, \mathrm{out}}^{(N)}$, $\mathsf{M}_{i+j, \mathrm{in}}^{(N)}$, and $\mathsf{M}_{i+j, \mathrm{out}}^{(N)}$.

\subsection{Verification of eigenvalues.}
To validate the eigenvalues of \eqref{eq:RR}, it is straightforward to enclose the coefficient matrices of \eqref{eq:RR}, i.e., \eqref{eq:HMsHM}.
Nevertheless, we exploit alternative quantities.
To this end, we prepare the following lemma.	

\begin{lemma} \label{lm:DXBX=XBXD}
	Let $D = D_1 \oplus D_2 \in \mathbb{R}^{n \times n}$ be a diagonal matrix with $D_1 \in \mathbb{R}^{t \times t}$ and the column vectors of $X \in \mathbb{C}^{n \times n}$ and $X_\Omega \in \mathbb{C}^{n \times t}$ be the eigenvectors~$\boldsymbol{x}_1$, $\boldsymbol{x}_2$, $\dots$, $\boldsymbol{x}_n$ and $\boldsymbol{x}_1$, $\boldsymbol{x}_2$, $\dots$, $\boldsymbol{x}_t$ of \eqref{eq:gevp}, respectively.
	Then, we have
	\begin{align}
		D_1 {X_\Omega}^\mathsf{H} B X = {X_\Omega}^\mathsf{H} B X D.
	\end{align}
\end{lemma}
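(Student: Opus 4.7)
The plan is to exploit the Weierstrass canonical form already established earlier in the excerpt, specifically the fact that $X^\mathsf{H}(zB-A)X = z\mathrm{I}_\mathrm{o} - \Lambda$ with $\mathrm{I}_\mathrm{o} = \mathrm{I}_r \oplus \mathrm{O}$. Comparing the coefficient of $z$ on both sides gives the key identity $X^\mathsf{H} B X = \mathrm{I}_\mathrm{o}$, which expresses the $B$-orthonormality (and $B$-isotropy of the tail) of the columns of $X$. This will immediately pin down the off-diagonal mixing that appears in $X_\Omega^\mathsf{H} B X$.

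Next I would partition $X = [X_\Omega, X_\Omega^{\mathrm{c}}]$, where $X_\Omega^{\mathrm{c}} \in \mathbb{C}^{n \times (n-m)}$ collects the eigenvectors $\boldsymbol{x}_{m+1}, \dots, \boldsymbol{x}_n$. Reading off the first $m$ rows of $X^\mathsf{H} B X = \mathrm{I}_r \oplus \mathrm{O}$ (and using the standing assumption $m \leq r$, since the $m$ eigenvalues in $\Omega$ are finite), I obtain
\begin{equation*}
	X_\Omega^\mathsf{H} B X = [\mathrm{I}_m, \mathrm{O}_{m \times (n-m)}].
\end{equation*}

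From here the verification is a one-line block computation. Multiplying on the left by $D_1$ yields
\begin{equation*}
	D_1 X_\Omega^\mathsf{H} B X = [D_1, \mathrm{O}_{m \times (n-m)}],
\end{equation*}
while multiplying on the right by $D = D_1 \oplus D_2$ and using the block-diagonal action gives
\begin{equation*}
	X_\Omega^\mathsf{H} B X \, D = [\mathrm{I}_m, \mathrm{O}_{m \times (n-m)}](D_1 \oplus D_2) = [D_1, \mathrm{O}_{m \times (n-m)}].
\end{equation*}
The two sides coincide, which is the claimed identity.

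There is no real obstacle here; the lemma is essentially a bookkeeping statement whose content is that the only block of $X^\mathsf{H} B X$ touched by $X_\Omega^\mathsf{H}$ is the identity block corresponding to the eigenvectors inside $\Omega$. The only subtlety worth flagging is the implicit assumption $m \leq r$, so that the $m$ columns of $X_\Omega$ sit within the finite-eigenvalue block $\mathrm{I}_r$ rather than the singular tail $\mathrm{O}$; this is consistent with the paper's earlier convention that the eigenvalues in $\Omega$ are indexed $\lambda_1, \dots, \lambda_m$ among the finite eigenvalues $\lambda_1, \dots, \lambda_r$.
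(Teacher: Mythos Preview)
Your proof is correct and follows essentially the same approach as the paper: establish $X_\Omega^\mathsf{H} B X = [\mathrm{I}_m, \mathrm{O}]$ from $B$-orthonormality, then observe that left-multiplication by $D_1$ and right-multiplication by $D = D_1 \oplus D_2$ both yield $[D_1, \mathrm{O}]$. You simply add an extra sentence deriving $X^\mathsf{H} B X = \mathrm{I}_\mathrm{o}$ from the Weierstrass canonical form, which the paper leaves implicit.
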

\begin{proof}
As ${X_\Omega}^\mathsf{H} B X = [\mathrm{I}_t, \mathrm{O}]$ holds for the $B$-orthonormality of the eigenvectors, we have 
\begin{align}
D_1 {X_\Omega}^\mathsf{H} B X
& = D_1 [\mathrm{I}_t, \mathrm{O}] \\
& = [\mathrm{I}_t, \mathrm{O}] D \\
& = {X_\Omega}^\mathsf{H} B X D.
\end{align}
\end{proof}

We now give a link between the coefficient matrices of \eqref{eq:RR} and their splittings.

\begin{theorem} \label{th:same_eigenvalues}
Let $B$ be a Hermitian positive semidefinite matrix and $S$ be defined as in \eqref{eq:Sk} and 
\begin{align}
S_\mathrm{in}^{(N)} = \left[ S_{0, \mathrm{in}}^{(N)}, S_{1, \mathrm{in}}^{(N)}, \dots, S_{m-1, \mathrm{in}}^{(N)} \right],
\label{eq:SinN}
\end{align}
where $S_{k, \mathrm{in}}^{(N)}$ is as defined in \eqref{eq:SkinN}.
Assume $\mathrm{rank} S = t$.
Then, the matrix pencils~$S^\mathsf{H} (A - z B) S$ and $(S_\mathrm{in}^{(N)})^\mathsf{H} (A - z B) S_\mathrm{in}^{(N)}$ have the same eigenvalues.
\end{theorem}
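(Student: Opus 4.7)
The plan is to reduce both pencils to a common template $K^\mathsf{H}(\Lambda_\Omega - z \mathrm{I}_m) K$ with $K \in \mathbb{C}^{m \times ML}$ of full row rank $m$, so that Theorem~\ref{th:RR} (or an equivalent reduction of the same pattern) produces the eigenvalues $\lambda_1, \dots, \lambda_m$ for each.

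First, I would factor $S$ and $S_\mathrm{in}^{(N)}$ through $X_\Omega$. The residue calculation preceding Theorem~\ref{th:RR} gives $M_k = X_\Omega (\Lambda_\Omega - \gamma \mathrm{I}_m)^k X_\Omega^\mathsf{H}$, so $S = X_\Omega R$ with
\begin{equation*}
R = [W, (\Lambda_\Omega - \gamma \mathrm{I}_m) W, \dots, (\Lambda_\Omega - \gamma \mathrm{I}_m)^{M-1} W], \quad W = X_\Omega^\mathsf{H} B V.
\end{equation*}
The analogous expansion of $M_{k, \mathrm{in}}^{(N)} = X_\Omega (\Lambda_\Omega - \gamma \mathrm{I}_m)^k D_\Omega^{(N)} X_\Omega^\mathsf{H}$, combined with the commutativity of the two $m \times m$ diagonal matrices $\Lambda_\Omega - \gamma \mathrm{I}_m$ and $D_\Omega^{(N)}$, then gives $S_\mathrm{in}^{(N)} = X_\Omega D_\Omega^{(N)} R$.

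Substituting these factorizations into each pencil via the identity $X_\Omega^\mathsf{H}(A - zB) X_\Omega = \Lambda_\Omega - z\mathrm{I}_m$, which follows from $X_\Omega^\mathsf{H} A X_\Omega = \Lambda_\Omega$ and the $B$-orthonormality $X_\Omega^\mathsf{H} B X_\Omega = \mathrm{I}_m$ recorded above, produces
\begin{equation*}
S^\mathsf{H}(A - zB) S = R^\mathsf{H} (\Lambda_\Omega - z\mathrm{I}_m) R, \quad (S_\mathrm{in}^{(N)})^\mathsf{H}(A - zB) S_\mathrm{in}^{(N)} = (D_\Omega^{(N)} R)^\mathsf{H} (\Lambda_\Omega - z \mathrm{I}_m)(D_\Omega^{(N)} R),
\end{equation*}
where I used that $D_\Omega^{(N)}$ is real diagonal and hence self-adjoint. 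Both pencils now share the common form $K^\mathsf{H}(\Lambda_\Omega - z\mathrm{I}_m) K$.

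To close the argument, I would verify the full-row-rank condition on each $K$. Because every $\lambda_i \in (a, b)$ satisfies $|\lambda_i - \gamma|/\rho < 1$, each diagonal entry $d_i^{(N)}$ of $D_\Omega^{(N)}$ is a real nonzero number, so $D_\Omega^{(N)}$ is nonsingular. Together with the hypothesis $\mathrm{rank}\,S = m$, which forces $\mathrm{rank}\,R = m$ since $X_\Omega$ has rank $m$, this gives $\mathrm{rank}\,(D_\Omega^{(N)} R) = m$. Theorem~\ref{th:RR} applied to $S^\mathsf{H}(A - zB) S$ then yields the regular-part eigenvalues $\lambda_1, \dots, \lambda_m$, and an identical argument applies to $(S_\mathrm{in}^{(N)})^\mathsf{H}(A - zB) S_\mathrm{in}^{(N)}$, since only the template $K^\mathsf{H}(\Lambda_\Omega - z\mathrm{I}_m) K$ with $\mathrm{rank}\,K = m$ enters. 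The step I expect to treat most carefully is the interpretation of ``eigenvalues'' when the outer pencil is singular on $\mathbb{C}^{ML}$ for $ML > m$; a thin SVD $K = U \Sigma V^\mathsf{H}$ extended to a unitary of order $ML$ decouples $K^\mathsf{H}(\Lambda_\Omega - z\mathrm{I}_m) K$ into an $m \times m$ regular block whose characteristic polynomial is proportional to $\det(\Lambda_\Omega - z\mathrm{I}_m) = \prod_{i=1}^m (\lambda_i - z)$ and a zero singular block, making the coincidence of the regular parts explicit.
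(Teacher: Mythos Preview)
Your argument is correct and follows essentially the same route as the paper: both proofs show that the diagonal factor $D_\Omega^{(N)}$ merely rescales the ``random'' block $W = X_\Omega^\mathsf{H} B V$ without affecting the eigenvalues, and both then defer to Theorem~\ref{th:RR}. The paper phrases this by absorbing $D_\Omega^{(N)}$ into a modified source matrix $V' = XDC$ via Lemma~\ref{lm:DXBX=XBXD}, whereas you keep the factorization $S_\mathrm{in}^{(N)} = X_\Omega D_\Omega^{(N)} R$ and work directly with $K = D_\Omega^{(N)} R$; your version is a bit more explicit about the rank condition and the treatment of the singular block, which the paper leaves implicit in its appeal to Theorem~\ref{th:RR}.
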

\begin{proof}
Let $D^{(N)} = \mathrm{diag} (d_1^{(N)}, d_2^{(N)}, \dots, d_n^{(N)})$ with $d_i^{(N)} \in \mathbb{C}$ defined in \eqref{eq:diN} and $X \in \mathbb{C}^{n \times n}$ be defined as in Lemma~\ref{lm:DXBX=XBXD}.
Denote the $j$th column vector of $V = X C \in \mathbb{C}^{n \times \ell}$ and $V^{(N)} = X D^{(N)} C \in \mathbb{C}^{n \times \ell}$ by $\boldsymbol{v}_j = \sum_{i=1}^n c_{i j} \boldsymbol{x}_i$ and $\boldsymbol{v}_i^{(N)} = \sum_{i=1}^n c_{ij} d_i^{(N)} \boldsymbol{x}_i$, respectively, i.e., an expansion of the $j$th column of $V$ by the eigenvectors, for $j = 1$, $2$, $\dots$, $\ell$, where $C = (c_{ij}) \in \mathbb{C}^{n \times \ell}$.
Then, we have
\begin{align}
(S_{i, \mathrm{in}}^{(N)})^\mathsf{H} (A - \gamma B) S_{j, \mathrm{in}}^{(N)}
& = V^\mathsf{H} B X_\Omega D_\Omega^{(N)} (\Lambda_\Omega - z \mathrm{I}_t)^{i+j+1} D_\Omega^{(N)} {X_\Omega}^\mathsf{H} B V \\
& = (V^{(N)})^\mathsf{H} B X_\Omega (\Lambda_\Omega - z \mathrm{I}_t)^{i+j+1} {X_\Omega}^\mathsf{H} B V^{(N)}
\end{align}
for $i, j = 0, 1, \dots, m-1$.
From \eqref{eq:Sk} and $S_{k, \mathrm{in}} = X_\Omega (\Lambda_\Omega - \gamma \mathrm{I}_t)^k {X_\Omega}^\mathsf{H} B V^{(N)}$, it follows that we have the identity~$\mathrm{rank} (S) = \mathrm{rank} (S_\mathrm{in}^{(N)}) = t$.
Because Theorem~\ref{th:RR} holds even replacing $V$ by $V^{(N)}$, \eqref{eq:gevp} and $(S_\mathrm{in}^{(N)})^\mathsf{H} (A - z B) S_\mathrm{in}^{(N)}$ have the same eigenvalues.
Therefore, the assertion holds.
\end{proof}

Thanks to the relationships~\eqref{eq:MijinN} and $\mathsf{M}_{i+j, \mathrm{in}}^{(N)} = (S_{i, \mathrm{in}}^{(N)})^\mathsf{H} B S_{j, \mathrm{in}}^{(N)}$ and Theorem~\ref{th:same_eigenvalues}, we enclose $\mathsf{M}_{k, \mathrm{in}}^{(N)}$ instead of $\mathsf{M}_k$ for $k = 0, 1, \dots, 2 m-1$.
From the splitting~\eqref{eq:splitMkN}, $\mathsf{M}_{k, \mathrm{out}}^{(N)}$ can be regarded as the truncated error for quadrature.
Denote the quantity obtained by numerically computing $\mathsf{M}_k^{(N)}$ by $\tilde{\mathsf{M}}_k^{(N)}$.
Hereafter, we denote a numerically computed quantity that may suffer from rounding errors with a tilde.

\begin{theorem} \label{th:errbound}
	Denote the interval matrix with radius~$R \in \mathbb{R}_+^{\ell \times \ell}$ and center at $C \in \mathbb{R}^{\ell \times \ell}$ by $\langle C, R \rangle$.
	Then, the enclosure of $\mathsf{M}_{k, \mathrm{in}}^{(N)}$ is given by
	\begin{align}
		\mathsf{M}_{k, \mathrm{in}}^{(N)} & \in \left\langle \mathsf{M}_k^{(N)}, \left| \mathsf{M}_{k, \mathrm{out}}^{(N)} \right| \right\rangle \\
		& \subset \left\langle \tilde{\mathsf{M}}_k^{(N)}, \left| \mathsf{M}_{k, \mathrm{out}}^{(N)} \right| + \left| \tilde{\mathsf{M}}_k^{(N)} - \mathsf{M}_k^{(N)} \right| \right\rangle
		\label{eq:enclosure4MkinN}
	\end{align}
	for $k = 0$, $1$, $\dots$, $2m-1$.
\end{theorem}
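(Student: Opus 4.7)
The plan is to derive both inclusions directly from the splitting identity~\eqref{eq:splitMkN} together with componentwise triangle inequalities, treating an interval matrix $\langle C, R\rangle$ as the set of matrices $Y$ with $|Y-C|\le R$ entrywise.

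First, I would rearrange \eqref{eq:splitMkN} as $\mathsf{M}_{k,\mathrm{in}}^{(N)} - \mathsf{M}_k^{(N)} = -\mathsf{M}_{k,\mathrm{out}}^{(N)}$, so that taking componentwise absolute values yields the equality $|\mathsf{M}_{k,\mathrm{in}}^{(N)} - \mathsf{M}_k^{(N)}| = |\mathsf{M}_{k,\mathrm{out}}^{(N)}|$. By the definition of the interval matrix $\langle \cdot, \cdot\rangle$, this immediately gives $\mathsf{M}_{k,\mathrm{in}}^{(N)} \in \langle \mathsf{M}_k^{(N)}, |\mathsf{M}_{k,\mathrm{out}}^{(N)}|\rangle$, the first inclusion.

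Next, for the second inclusion I would apply the componentwise triangle inequality
\begin{align}
    \bigl|\mathsf{M}_{k,\mathrm{in}}^{(N)} - \tilde{\mathsf{M}}_k^{(N)}\bigr| \le \bigl|\mathsf{M}_{k,\mathrm{in}}^{(N)} - \mathsf{M}_k^{(N)}\bigr| + \bigl|\mathsf{M}_k^{(N)} - \tilde{\mathsf{M}}_k^{(N)}\bigr|,
\end{align}
and substitute the equality from the previous step into the first summand on the right. This places $\mathsf{M}_{k,\mathrm{in}}^{(N)}$ inside the larger interval matrix centered at $\tilde{\mathsf{M}}_k^{(N)}$ with radius $|\mathsf{M}_{k,\mathrm{out}}^{(N)}| + |\mathsf{M}_k^{(N)} - \tilde{\mathsf{M}}_k^{(N)}|$, establishing the stated inclusion chain. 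The statement then follows for every $k=0,1,\dots,2M-1$ upon noting that an analogous splitting $\mathsf{M}_{k+\ell}^{(N)} = (S_{k,\mathrm{in}}^{(N)})^\mathsf{H} B S_{\ell,\mathrm{in}}^{(N)} + (S_{k,\mathrm{out}}^{(N)})^\mathsf{H} B S_{\ell,\mathrm{out}}^{(N)}$ holds for $S^\mathsf{H} B S$ by the $B$-orthogonality ${X_\Omega}^\mathsf{H} B X_{\Omega^\mathrm{c}} = \mathrm{O}$, which kills the cross terms and covers the even index $k=0$ not directly furnished by~\eqref{eq:splitMkN}.

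I do not anticipate a substantive obstacle: the entire argument is a two-line application of a triangle inequality once the algebraic splitting is in hand. The only point requiring care is the implicit claim that the in/out decomposition is clean (no cross terms) for both $S^\mathsf{H}(A-\gamma B)S$ and $S^\mathsf{H} B S$; this hinges on the $B$-orthogonality between $X_\Omega$ and $X_{\Omega^\mathrm{c}}$, which is already built into the canonical form used earlier. Everything else is bookkeeping with absolute values.
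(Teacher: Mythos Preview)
Your proposal is correct and follows essentially the same route as the paper: both use the splitting $\mathsf{M}_k^{(N)} - \mathsf{M}_{k,\mathrm{in}}^{(N)} = \mathsf{M}_{k,\mathrm{out}}^{(N)}$ for the first inclusion and a componentwise triangle inequality for the second. Your treatment is in fact slightly more careful than the paper's, since you explicitly justify why the in/out decomposition has no cross terms (via $X_\Omega^\mathsf{H} B X_{\Omega^\mathrm{c}} = \mathrm{O}$) and note that the $S^\mathsf{H} B S$ analogue is needed to cover the index $k=0$, which the displayed splitting~\eqref{eq:splitMkN} by itself does not reach.
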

\begin{proof}
The first enclosure of $\mathsf{M}_{k, \mathrm{in}}^{(N)}$ is obtained by the equality~$\mathsf{M}_k^{(N)} - \mathsf{M}_{k, \mathrm{in}} = \mathsf{M}_{k, \mathrm{out}}^{(N)}$ for $k = 0, 1, \dots, 2m-1$.
The second enclosure is obtained by using this equality and the inequality
\begin{align}
	\left| \mathsf{M}_{k, \mathrm{in}}^{(N)} - \tilde{\mathsf{M}}_{k}^{(N)} \right| & \leq \left| \mathsf{M}_{k, \mathrm{in}}^{(N)} - \mathsf{M}_k^{(N)} \right| + \left| \tilde{\mathsf{M}}_k^{(N)} - \mathsf{M}_k^{(N)} \right| \\
	& = \left| \mathsf{M}_{k, \mathrm{out}}^{(N)} \right| + \left| \tilde{\mathsf{M}}_k^{(N)} - \mathsf{M}_k^{(N)} \right|, \quad k = 0, 1, \dots, 2m-1.
\end{align}
\end{proof}

Theorem~\ref{th:errbound} implies that to enclose $\mathsf{M}_{k, \mathrm{in}}^{(N)}$, we can use $| \mathsf{M}_{k, \mathrm{out}}^{(N)} |$ and the truncated complex moment~$\mathsf{M}_k^{(N)}$ computed by using standard verification methods using interval arithmetic to obtain an enclosure of the truncation error~$| \mathsf{M}_k^{(N)} - \tilde{\mathsf{M}}_k^{(N)} |$.
Theorem~\ref{th:errbound} readily gives the following enclosure:
\begin{align}
H_{m, \mathrm{in}}^{<, (N)} & \subset \left\langle \tilde{H}_m^{<, (N)}, \left| H_{m, \mathrm{out}}^{<, (N)} \right| + \left| \tilde{H}_m^{<, (N)} - H_m^{<, (N)} \right| \right\rangle, \\
H_{m, \mathrm{in}}^{(N)} & \subset \left\langle \tilde{H}_m^{(N)}, \left| H_{m, \mathrm{out}}^{(N)} \right| + \left| \tilde{H}_m^{(N)} - H_m^{(N)} \right| \right\rangle.
\label{eq:HMinN}
\end{align}
An enclosure of $| \mathsf{M}_{k, \mathrm{out}}^{(N)} |$ is obtained as follows.

\begin{theorem}	\label{th:bound_M}
Let $B$ be a Hermitian positive semidefinite definite matrix.
Assume $2 m - 1 < N$ and that $\hat{\lambda} \in \mathbb{R}$ satisfies $| \hat{\lambda} - \gamma | = \min_{i=t+1, t+2, \dots, r} |\lambda_i-\gamma|$.
Then, $| \mathsf{M}_{k, \mathrm{out}}^{(N)} |$ in \eqref{eq:MkNout} is bounded by
\begin{align}
\left|\mathsf{M}_{k,\mathrm{out}}^{(N)}\right| \le (r-t)\left|\hat{\lambda}-\gamma\right|^k \left(\frac{\left(\frac{\rho}{\left|\hat{\lambda}-\gamma\right|}\right)^{2 N}}{1-\left(\frac{\rho}{\left|\hat{\lambda}-\gamma\right|}\right)^{2 N}}\right) \left\|V^{\mathsf{H}} B V \right\|_{\mathsf{F}}
\label{eq:bound_MkoutN}
\end{align}
for $k = 0$, $1$, $\dots$, $2m - 1$.
\end{theorem}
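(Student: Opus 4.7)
The plan is to collapse $\mathsf{M}_{k,\mathrm{out}}^{(N)}$ to a weighted sum over the $r-m$ outside eigenvectors and then bound the scalar weights uniformly under the hypothesis $2M-1 < N$.

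First, I would substitute \eqref{eq:SkoutN}--\eqref{eq:MkNout} into whichever Rayleigh--Ritz product realizes $\mathsf{M}_{k,\mathrm{out}}^{(N)}$ for the given index and use the Weierstrass identity $X_{\Omega^\mathrm{c}}^\mathsf{H}(A - \gamma B) X_{\Omega^\mathrm{c}} = \Lambda_{\Omega^\mathrm{c}} - \gamma \mathrm{I}_{r-m}$ together with the $B$-orthonormality $X_{\Omega^\mathrm{c}}^\mathsf{H} B X_{\Omega^\mathrm{c}} = \mathrm{I}_{r-m}$ to collapse it to
\begin{align*}
\mathsf{M}_{k,\mathrm{out}}^{(N)} = \sum_{i=m+1}^r (d_i^{(N)})^2 (\lambda_i - \gamma)^k (V^\mathsf{H} B \boldsymbol{x}_i)(V^\mathsf{H} B \boldsymbol{x}_i)^\mathsf{H}.
\end{align*}
This reduces the problem to estimating a signed sum of $r-m$ rank-one matrices in the reduced space.

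Second, I would pull the entry-wise absolute value through the sum by the triangle inequality and bound the scalar weight $|d_i^{(N)}|^2 |\lambda_i-\gamma|^k$ uniformly in $i$. Rewriting this weight as $\rho^{2N}/(|\lambda_i-\gamma|^{2N-k}\,|1-(\rho/(\lambda_i-\gamma))^N|^2)$, the hypothesis $2M-1 < N$ makes the exponent $2N-k$ strictly positive, so the $|\lambda_i-\gamma|^{2N-k}$ factor is smallest at $\lambda_i = \hat\lambda$. Exploiting the factorization $1-s^{2N} = (1-s^N)(1+s^N)$ for the real quantity $s = \rho/(\lambda_i-\gamma)$ and a case split on the sign of $\lambda_i - \gamma$ (both cases governed by $\hat\lambda$ as the worst case) yields the intended uniform bound $|\hat\lambda-\gamma|^k (\rho/|\hat\lambda-\gamma|)^{2N}/(1 - (\rho/|\hat\lambda-\gamma|)^{2N})$.

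Third, I would control $\sum_{i=m+1}^r |V^\mathsf{H} B \boldsymbol{x}_i (V^\mathsf{H} B \boldsymbol{x}_i)^\mathsf{H}|$ by $\|V^\mathsf{H} B V\|_\mathsf{F}$. Writing $V = X Y$ with $Y = X^{-1} V$ and using $X^\mathsf{H} B X = \mathrm{I}_\mathrm{o}$, the $B$-orthogonality gives $V^\mathsf{H} B X_{\Omega^\mathrm{c}} X_{\Omega^\mathrm{c}}^\mathsf{H} B V = Y_{\Omega^\mathrm{c}}^\mathsf{H} Y_{\Omega^\mathrm{c}} \preceq Y_r^\mathsf{H} Y_r = V^\mathsf{H} B V$ in the Loewner order, whence its Frobenius norm, and therefore any individual entry modulus, is dominated by $\|V^\mathsf{H} B V\|_\mathsf{F}$. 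Absorbing the cardinality factor $r-m$ then assembles the stated bound.

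The main obstacle is the second step: a straightforward estimate $|1-(\rho/(\lambda_i-\gamma))^N|^2 \ge (1-(\rho/|\lambda_i-\gamma|)^N)^2$ yields the looser denominator $(1-t^N)^2$ rather than $1-t^{2N}$. Recovering the stated tighter form requires using the $(1-s^N)(1+s^N)$ factorization together with a careful sign analysis of $\lambda_i-\gamma$ (and, if needed, the parity of $N$), mirroring the argument used for the Hankel matrix approach in \cite{ImakuraMorikuniTakayasu2019JCAM}.
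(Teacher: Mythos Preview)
Your overall plan matches the paper's: write $\mathsf{M}_{k,\mathrm{out}}^{(N)} = \sum_{i=m+1}^r (\lambda_i-\gamma)^k (d_i^{(N)})^2\,\mathcal{V}_i$ with $\mathcal{V}_i = V^\mathsf{H} B \boldsymbol{x}_i \boldsymbol{x}_i^\mathsf{H} B V$, take entrywise absolute values via the triangle inequality, bound the scalar weight $|\lambda_i-\gamma|^k (d_i^{(N)})^2$ uniformly in $i$, and bound each $|\mathcal{V}_i|$ by $\|V^\mathsf{H} B V\|_\mathsf{F}$. For the last step the paper does not run your Loewner-order argument; it simply cites the corresponding bound from \cite[Theorem~3.3]{ImakuraMorikuniTakayasu2019JCAM}, so your justification is more self-contained but reaches the same inequality.

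The one substantive difference is how the scalar weight is handled, precisely at the point you flag as the obstacle. You propose a direct factorization $1-s^{2N}=(1-s^N)(1+s^N)$ together with a sign split on $\lambda_i-\gamma$. The paper instead expands $d_i^{(N)} = -\sum_{j\ge 1} \bigl(\rho/(\lambda_i-\gamma)\bigr)^{jN}$ as a geometric series, squares, and passes to $\sum_{j\ge 1} \bigl|\rho/(\lambda_i-\gamma)\bigr|^{2jN}$. Multiplying by $|\lambda_i-\gamma|^k$ gives $\sum_{j\ge 1}\rho^{2jN}|\lambda_i-\gamma|^{-(2jN-k)}$; since $k\le 2M-1<N\le 2jN$, each exponent $2jN-k$ is positive, so $\lambda_i$ may be replaced by $\hat\lambda$ term by term and the series resummed to the stated denominator $1-(\rho/|\hat\lambda-\gamma|)^{2N}$ without any case analysis. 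That geometric-series manoeuvre is the paper's device for sidestepping the $(1-t^N)^2$ versus $1-t^{2N}$ tightening you worry about; your factorization route is a viable alternative but, as you anticipate, needs the sign/parity bookkeeping that the series argument absorbs into a single line. (It is worth noting, in light of your concern, that the paper's squaring step $\bigl[\sum_{j\ge1} s^{jN}\bigr]^2 \le \sum_{j\ge1}|s|^{2jN}$ is itself the place where the sign of $s=\rho/(\lambda_i-\gamma)$ enters, so the delicacy you identify does not entirely disappear there either.)
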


\begin{proof}
Let $\mathcal{V}_i = V^\mathsf{H} B \boldsymbol{x}_i \boldsymbol{x}_i^\mathsf{H} B V$.
Then,  applying the triangular inequality, we have
\begin{align}
\left| \mathsf{M}_{k, \mathrm{out}}^{(N)} \right| 
& = \left| \sum_{i=t+1}^r (\lambda_i - \gamma)^k {d_i}^2 \mathcal{V}_i \right| \\
& \leq \sum_{i=t+1}^r \left| \lambda_i - \gamma \right|^k {d_i}^2 \left| \mathcal{V}_i \right|
\end{align}
for $k = 0, 1, \dots, 2m-1$.
Noting the geometric series and applying the triangular inequality, we obtain
\begin{align}
{d_i}^2 
& =	\left[ \sum_{j=1}^\infty \left( \frac{\rho}{\lambda_i - \gamma} \right)^{jN} \right]^2 \\
& \leq \sum_{j=1}^\infty \left| \frac{\rho}{\lambda_i - \gamma} \right|^{2jN}
\end{align}
for $i = t+1$, $t+2$, $\dots$, $r$.
Multiplied by the factor~$| \lambda_i - \gamma |^k$, we obtain
\begin{align}
\left| \lambda_i - \gamma \right|^k {d_i}^2
& \leq \sum_{j=1}^\infty \rho^{2jN} \left| \lambda_i - \gamma \right|^{-(2jN-k)} \\
& \leq \sum_{j=1}^\infty \rho^{2jN} | \hat{\lambda} - \gamma |^{-(2jN-k)} \\
& = | \hat{\lambda} - \gamma |^k \frac{\left( \frac{\rho}{\left| \hat{\lambda} - \gamma \right|} \right)^{2N}}{1 - \left( \frac{\rho}{\left| \hat{\lambda} - \gamma \right|} \right)^{2N}}
\end{align}
for $i = t+1$, $t+2$, $\dots$, $r$ and $k = 0, 1, \dots, 2m-1$.
Here, the assumption~$2m-1 < N$ ensures $k < N$.
Noting that the last expression is independent of the index~$i$, we have
\begin{align}
\left|\mathsf{M}_{k,\mathrm{out}}^{(N)}\right| 
\leq | \hat{\lambda} - \gamma |^k \frac{\left( \frac{\rho}{\left| \hat{\lambda} - \gamma \right|} \right)^{2N}}{1 - \left( \frac{\rho}{\left| \hat{\lambda} - \gamma \right|} \right)^{2N}} \sum_{i=t+1}^r |\mathcal{V}_i |.
\end{align}
The bound $| \mathcal{V}_i | \leq \| V^\mathsf{H} B V \|_\mathsf{F}$ follows from the latter half of the proof of \cite[Theorem~3.3]{ImakuraMorikuniTakayasu2019JCAM}.
Therefore, we obtain \eqref{eq:bound_MkoutN}.
\end{proof}

\begin{remark}
The bound~\eqref{eq:bound_MkoutN} for the proposed Rayleigh--Ritz procedure approach is twice sharper than the one for the Hankel matrix approach~\cite[Theorem~3.3]{ImakuraMorikuniTakayasu2019JCAM}, i.e., the proposed method requires half the number of quadrature points required by the Hankel matrix approach to allow the same amount of truncation errors.
This observation is demonstrated in section~\ref{sec:nuex}.
\end{remark}

\subsection{Verification of eigenvectors.}
To verify the eigenvectors~$\boldsymbol{x}_i$ of \eqref{eq:gevp} via the Rayleigh--Ritz procedure approach as well as the Hankel matrix approach, we show the identity of the eigenvectors given by $S$ and $S_\mathrm{in}^{(N)}$.

\begin{theorem}
Assume that $B$ is a Hermitian and positive definite matrix.
Let $S$ and $S_\mathrm{in}^{(N)}$ be defined as in \eqref{eq:Sk} and \eqref{eq:SinN}, respectively, such that $\mathrm{rank} (S) = t$ and $\boldsymbol{y} \in \mathbb{C}^{\ell m}$ be an eigenvector of $(S_\mathrm{in}^{(N)})^\mathsf{H} A S_\mathrm{in}^{(N)} \boldsymbol{y} = \lambda (S_\mathrm{in}^{(N)})^\mathsf{H} B S_\mathrm{in}^{(N)} \boldsymbol{y}$.
	If $S \boldsymbol{y}$ is an eigenvector of \eqref{eq:gevp}, then $S_\mathrm{in}^{(N)} \boldsymbol{y}$ is also an eigenvector of \eqref{eq:gevp}.
\end{theorem}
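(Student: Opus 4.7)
The plan is to reduce the assertion to a commutativity statement about diagonal matrices by factoring both $S$ and $S_\mathrm{in}^{(N)}$ through the eigenbasis $X_\Omega$. Since $B$ is positive definite, the canonical form collapses to $X^\mathsf{H} B X = \mathrm{I}_n$ and $A X_\Omega = B X_\Omega \Lambda_\Omega$. Writing $V = X C$ and letting $C_\Omega$ denote the top $m$ rows of $C$, the $B$-orthonormality gives $X_\Omega^\mathsf{H} B V = C_\Omega$, and the closed-form expressions $M_k = X_\Omega (\Lambda_\Omega - \gamma \mathrm{I}_m)^k X_\Omega^\mathsf{H}$ and $M_{k,\mathrm{in}}^{(N)} = X_\Omega (\Lambda_\Omega - \gamma \mathrm{I}_m)^k D_\Omega^{(N)} X_\Omega^\mathsf{H}$ already exhibited in the excerpt yield, after commuting the two diagonal factors,
\begin{equation*}
    S = X_\Omega W, \qquad S_\mathrm{in}^{(N)} = X_\Omega D_\Omega^{(N)} W,
\end{equation*}
where $W = [C_\Omega, (\Lambda_\Omega - \gamma \mathrm{I}_m) C_\Omega, \dots, (\Lambda_\Omega - \gamma \mathrm{I}_m)^{M-1} C_\Omega] \in \mathbb{C}^{m \times LM}$ depends only on $C$ and $\Lambda_\Omega$, not on which of the two transformation matrices is used.

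Given these parallel factorizations, I would first translate the hypothesis $A S \boldsymbol{y} = \lambda B S \boldsymbol{y}$: substituting $S = X_\Omega W$ and using $A X_\Omega = B X_\Omega \Lambda_\Omega$ gives $B X_\Omega (\Lambda_\Omega - \lambda \mathrm{I}_m) W \boldsymbol{y} = 0$, and since $B$ is invertible and $X_\Omega$ has full column rank, this forces $(\Lambda_\Omega - \lambda \mathrm{I}_m) W \boldsymbol{y} = 0$. I would then plug $S_\mathrm{in}^{(N)} = X_\Omega D_\Omega^{(N)} W$ into $A S_\mathrm{in}^{(N)} \boldsymbol{y} - \lambda B S_\mathrm{in}^{(N)} \boldsymbol{y}$ to obtain $B X_\Omega (\Lambda_\Omega - \lambda \mathrm{I}_m) D_\Omega^{(N)} W \boldsymbol{y}$, and commute the diagonal factor $D_\Omega^{(N)}$ past $(\Lambda_\Omega - \lambda \mathrm{I}_m)$ to conclude $(A - \lambda B) S_\mathrm{in}^{(N)} \boldsymbol{y} = 0$.

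What remains is to verify $S_\mathrm{in}^{(N)} \boldsymbol{y} \neq 0$. Here I would invoke the standing assumption from Section~1 that no eigenvalue sits at an endpoint of $\Omega$: for $i \leq m$ one has $\lambda_i \in (a, b)$, hence $|(\lambda_i - \gamma)/\rho| < 1$, so every $d_i^{(N)}$ is nonzero and $D_\Omega^{(N)}$ is nonsingular. Combined with injectivity of $X_\Omega$, this gives $S_\mathrm{in}^{(N)} \boldsymbol{y} = 0 \iff W \boldsymbol{y} = 0 \iff S \boldsymbol{y} = 0$, and the last case is excluded because $S \boldsymbol{y}$ is assumed to be an eigenvector. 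The only delicate point, mirroring the proof of Theorem~\ref{th:same_eigenvalues}, is establishing the common-factor decomposition cleanly; once that is in place, everything else is forced by commutativity of the diagonal blocks and the nonvanishing of the rescaling $D_\Omega^{(N)}$.
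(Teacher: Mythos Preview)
Your argument is correct. The key factorizations $S = X_\Omega W$ and $S_\mathrm{in}^{(N)} = X_\Omega D_\Omega^{(N)} W$ are exactly right (diagonal matrices commute, and $X_\Omega^\mathsf{H} B V = C_\Omega$ by $B$-orthonormality), the reduction of $AS\boldsymbol{y} = \lambda BS\boldsymbol{y}$ to $(\Lambda_\Omega - \lambda \mathrm{I}_m) W\boldsymbol{y} = 0$ via injectivity of $BX_\Omega$ is valid in the positive definite case, and the nonvanishing check using invertibility of $D_\Omega^{(N)}$ is clean and complete.

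The paper's own proof rests on the same insight---that $S_\mathrm{in}^{(N)}$ is $S$ with a diagonal rescaling in the eigenbasis---but packages it differently: it invokes Lemma~\ref{lm:DXBX=XBXD} to write $S_{k,\mathrm{in}}^{(N)} = X_\Omega(\Lambda_\Omega - \gamma\mathrm{I}_m)^k X_\Omega^\mathsf{H} B V'$ with $V' = XDC$, i.e.\ it views $S_\mathrm{in}^{(N)}$ as the exact transformation matrix one would obtain by replacing $V$ with a rescaled source $V'$, and then concludes via a range identity. Your route is more explicit: rather than changing $V$, you keep the common factor $W$ fixed and verify the eigenvector equation and the nonzero condition by hand. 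What the paper's framing buys is a direct appeal to the fact that Theorem~\ref{th:RR} is insensitive to the choice of $V$; what your framing buys is that the argument is entirely self-contained, avoids the somewhat terse range statement in the paper, and makes the role of the invertibility of $D_\Omega^{(N)}$ (hence the no-endpoint assumption) transparent.
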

\begin{proof}
Let $V^{(N)} = X D^{(N)} C$, where $X$ and $D^{(N)}$ are defined in Lemma~\ref{lm:DXBX=XBXD} and the proof of Theorem~\ref{th:same_eigenvalues}, respectively, and $C \in \mathbb{C}^{n \times L}$.
Then, from Lemma~\ref{lm:DXBX=XBXD}, it follows that
\begin{align}
S_{k, \mathrm{in}}^{(N)} 
& = X_\Omega (\Lambda_\Omega - \gamma \mathrm{I}_t)^k D_\Omega^{(N)} {X_\Omega}^\mathsf{H} B V \\
& = X_\Omega (\Lambda_\Omega - \gamma \mathrm{I}_t)^k {X_\Omega}^\mathsf{H} B V^{(N)}.
\end{align}
Because each eigencomponent of each column vector of $V^{(N)}$ is a scalar multiple of that of $V$, we have the identity~$\mathcal{R}(S_k) = \mathcal{R}(S_{k, \mathrm{in}}^{(N)})$, $k = 0, 1, \dots, M-1$.
\end{proof}

Motivated by this theorem, we focus on verifying $S_\mathrm{in}^{(N)}$, instead of $S$.

\begin{theorem}
Let 
\begin{align}
	S_\mathrm{out}^{(N)} = [S_{0, \mathrm{out}}^{(N)}, S_{1, \mathrm{out}}^{(N)}, S_{m-1, \mathrm{out}}^{(N)}].
	\label{eq:SoutN}
\end{align}
Then, we have the following enclosure of the approximated transformation matrix:
\begin{align}
	S_\mathrm{in}^{(N)} & \in \left\langle S^{(N)}, \left| S_\mathrm{out}^{(N)} \right| \right\rangle \\
	& \subset \left\langle \tilde{S}^{(N)}, \left| S_\mathrm{out}^{(N)} \right| + \left| \tilde{S}^{(N)} - S^{(N)} \right| \right\rangle.
	\label{eq:enclosure_S}
\end{align}
\end{theorem}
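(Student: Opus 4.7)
The statement is the natural column-wise analogue of Theorem~\ref{th:errbound}, with the transformation matrix $S_\mathrm{in}^{(N)}$ playing the role of the reduced complex moment $\mathsf{M}_{k,\mathrm{in}}^{(N)}$. My plan is to mirror that proof line-for-line, using only the splitting $S^{(N)} = S_\mathrm{in}^{(N)} + S_\mathrm{out}^{(N)}$ (which follows from concatenating the column-block identities $S_k^{(N)} = S_{k,\mathrm{in}}^{(N)} + S_{k,\mathrm{out}}^{(N)}$ given in \eqref{eq:SkinN}--\eqref{eq:SkoutN}) and the triangle inequality for absolute values of matrices.

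For the first enclosure, I would simply rearrange the splitting to obtain $S_\mathrm{in}^{(N)} - S^{(N)} = -S_\mathrm{out}^{(N)}$, so $|S_\mathrm{in}^{(N)} - S^{(N)}| = |S_\mathrm{out}^{(N)}|$, which by definition of the interval matrix $\langle\cdot,\cdot\rangle$ gives $S_\mathrm{in}^{(N)} \in \langle S^{(N)}, |S_\mathrm{out}^{(N)}|\rangle$. For the second enclosure, I would insert the numerically computed $\tilde{S}^{(N)}$ and apply the triangle inequality entry-wise,
\begin{align}
\left| S_\mathrm{in}^{(N)} - \tilde{S}^{(N)} \right|
\leq \left| S_\mathrm{in}^{(N)} - S^{(N)} \right| + \left| S^{(N)} - \tilde{S}^{(N)} \right|
= \left| S_\mathrm{out}^{(N)} \right| + \left| \tilde{S}^{(N)} - S^{(N)} \right|,
\end{align}
which yields the stated inclusion into $\langle \tilde{S}^{(N)}, |S_\mathrm{out}^{(N)}| + |\tilde{S}^{(N)} - S^{(N)}|\rangle$.

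There is no real obstacle here: everything reduces to the splitting $S^{(N)} = S_\mathrm{in}^{(N)} + S_\mathrm{out}^{(N)}$ and the triangle inequality, exactly as in the proof of Theorem~\ref{th:errbound}. The only point that deserves a sentence of justification is that the absolute value and the $\langle\cdot,\cdot\rangle$ notation extend componentwise from square to rectangular matrices, so that the radii $|S_\mathrm{out}^{(N)}|$ and $|\tilde{S}^{(N)} - S^{(N)}|$ in $\mathbb{R}_+^{n\times LM}$ are well-defined on the column blocks $S_{k,\mathrm{in}}^{(N)}$, $S_{k,\mathrm{out}}^{(N)}$.
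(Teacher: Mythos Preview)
Your proposal is correct and is precisely the argument the paper intends: the paper's own proof consists of the single sentence ``The proof is given similarly to that of Theorem~\ref{th:errbound},'' and you have spelled out exactly that analogy via the splitting $S^{(N)} = S_\mathrm{in}^{(N)} + S_\mathrm{out}^{(N)}$ and the entrywise triangle inequality.
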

\begin{proof}
	The proof is given similarly to that of Theorem~\ref{th:errbound}.
\end{proof}

\begin{theorem}	\label{th:bound_S}
Assume that $B$ is a Hermitian and positive definite matrix.
Assume $2 m - 1 < N$ and that $\hat{\lambda} \in \mathbb{R}$ satisfies $| \hat{\lambda} - \gamma | = \min_{i=t+1, t+2, \dots, r} |\lambda_i - \gamma|$.
Then, $S_{k, \mathrm{out}}^{(N)}$ defined in \eqref{eq:SkoutN} is bounded as
\begin{align}
\left|S_{k, \mathrm{out}}^{(N)}\right| \le (n-t)\left|\hat{\lambda}-\gamma\right|^k \left(\frac{\left(\frac{\rho}{\left|\hat{\lambda}-\gamma\right|}\right)^N}{1-\left(\frac{\rho}{\left|\hat{\lambda}-\gamma\right|}\right)^N}\right) \left( \| B^{-1} \|_2 \| V^\mathsf{H} B V \|_\mathsf{F}\right)^{1/2}
\label{eq:outer_S}
\end{align}
for $k = 0$, $1$, $\dots$, $m - 1$.
\end{theorem}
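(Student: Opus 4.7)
The plan is to adapt the geometric-series argument of the proof of Theorem~\ref{th:bound_M}, exploiting the stronger hypothesis that $B$ is positive definite so that each eigenvector~$\boldsymbol{x}_i$ appears \emph{linearly} rather than quadratically. This replacement is what will yield the exponent $N$ in the numerator (in place of $2N$) and the factor $(\|B^{-1}\|_2\,\|V^\mathsf{H} B V\|_\mathsf{F})^{1/2}$ (in place of $\|V^\mathsf{H} B V\|_\mathsf{F}$).

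First, I substitute \eqref{eq:MkNout} into \eqref{eq:SkoutN}, using $r = n$ (which holds because $B$ is positive definite), to obtain
\begin{align*}
S_{k, \mathrm{out}}^{(N)} = \sum_{i=m+1}^n (\lambda_i - \gamma)^k d_i^{(N)}\, \boldsymbol{x}_i \boldsymbol{x}_i^\mathsf{H} B V.
\end{align*}
Taking entrywise absolute values and applying the triangle inequality reduces the task to bounding $|\lambda_i - \gamma|^k |d_i^{(N)}|$ and $|\boldsymbol{x}_i \boldsymbol{x}_i^\mathsf{H} B V|$ separately. For the first quantity I would reuse verbatim the geometric-series computation in the proof of Theorem~\ref{th:bound_M}: the hypothesis $2M - 1 < N$ guarantees $k \leq M - 1 < N$, so expanding $d_i^{(N)} = -\sum_{j \geq 1} (\rho/(\lambda_i - \gamma))^{jN}$, taking absolute values, and replacing $|\lambda_i - \gamma|$ by the uniform lower bound $|\hat{\lambda} - \gamma|$ yields
\begin{align*}
|\lambda_i - \gamma|^k |d_i^{(N)}| \leq |\hat{\lambda} - \gamma|^k\, \frac{(\rho/|\hat{\lambda} - \gamma|)^N}{1 - (\rho/|\hat{\lambda} - \gamma|)^N}
\end{align*}
uniformly in $i = m+1, m+2, \ldots, n$, the only change being the absence of the squared exponent.

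The main new ingredient---and the step I expect to be the crux---is the entrywise bound $|\boldsymbol{x}_i \boldsymbol{x}_i^\mathsf{H} B V|_{pq} \leq (\|B^{-1}\|_2\,\|V^\mathsf{H} B V\|_\mathsf{F})^{1/2}$, uniform in $i$ and in the entry~$(p, q)$. To establish it, I write $|\boldsymbol{x}_i \boldsymbol{x}_i^\mathsf{H} B V|_{pq} = |(\boldsymbol{x}_i)_p|\,|(V^\mathsf{H} B \boldsymbol{x}_i)_q| \leq \|\boldsymbol{x}_i\|_2\,\|V^\mathsf{H} B \boldsymbol{x}_i\|_2$. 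Since $B$ is positive definite, the $B$-orthonormality $X^\mathsf{H} B X = \mathrm{I}_n$ is equivalent to $X X^\mathsf{H} = B^{-1}$; hence $\|\boldsymbol{x}_i\|_2^2 \leq \|X^\mathsf{H} X\|_2 = \|X X^\mathsf{H}\|_2 = \|B^{-1}\|_2$, while the identity $(V^\mathsf{H} B X)(V^\mathsf{H} B X)^\mathsf{H} = V^\mathsf{H} B X X^\mathsf{H} B V = V^\mathsf{H} B V$ gives $\|V^\mathsf{H} B \boldsymbol{x}_i\|_2 \leq \|V^\mathsf{H} B X\|_2 = \|V^\mathsf{H} B V\|_2^{1/2} \leq \|V^\mathsf{H} B V\|_\mathsf{F}^{1/2}$. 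Summing the resulting uniform bound over $i = m+1, \ldots, n$ produces the factor $(n - m)$, and multiplying by the geometric-series estimate from the previous step delivers \eqref{eq:outer_S}. Without the identity $X X^\mathsf{H} = B^{-1}$---that is, without positive definiteness of $B$---neither the bound $\|\boldsymbol{x}_i\|_2 \leq \|B^{-1}\|_2^{1/2}$ nor the simplification of $(V^\mathsf{H} B X)(V^\mathsf{H} B X)^\mathsf{H}$ would be available, which is precisely why the hypothesis has been strengthened relative to Theorem~\ref{th:bound_M}.
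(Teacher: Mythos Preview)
Your argument is correct and follows the same overall architecture as the paper's proof: expand $S_{k,\mathrm{out}}^{(N)}$ as a sum of rank-one terms, bound the scalar factor $|\lambda_i-\gamma|^k\,|d_i^{(N)}|$ by the geometric-series estimate exactly as in Theorem~\ref{th:bound_M}, and then bound each $|\boldsymbol{x}_i\boldsymbol{x}_i^\mathsf{H} B V|$ entrywise. The only substantive difference lies in that last step. The paper inserts the factorisation $\boldsymbol{x}_i\boldsymbol{x}_i^\mathsf{H} B V = B^{-1/2}(B^{1/2}\boldsymbol{x}_i)(B^{1/2}\boldsymbol{x}_i)^\mathsf{H}(B^{1/2}V)$ and uses the $B$-orthonormality directly via $\|B^{1/2}\boldsymbol{x}_i\|_2=1$, so that the two remaining factors produce $\|B^{-1/2}\|_2\,\|B^{1/2}V\|_2=(\|B^{-1}\|_2\,\|V^\mathsf{H} B V\|_2)^{1/2}$. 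You instead avoid the matrix square root altogether by invoking the identity $XX^\mathsf{H}=B^{-1}$ (equivalent to $X^\mathsf{H} B X=\mathrm{I}_n$ when $B$ is positive definite), which simultaneously gives $\|\boldsymbol{x}_i\|_2^2\le\|X^\mathsf{H} X\|_2=\|B^{-1}\|_2$ and $(V^\mathsf{H} B X)(V^\mathsf{H} B X)^\mathsf{H}=V^\mathsf{H} B V$. Both devices exploit the positive-definiteness of $B$ in an essential way and deliver the same constant; your route is slightly more elementary in that it never forms $B^{1/2}$, while the paper's route makes the role of $B$-orthonormality ($\boldsymbol{x}_i^\mathsf{H} B\boldsymbol{x}_i=1$) more transparent.
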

\begin{proof}
	Similarly to the proof of Theorem~\ref{th:bound_M}, we have
	\begin{align}
		\left| S_{k, \mathrm{out}}^{(N)} \right| 
		& = \left| \sum_{i=t+1}^r (\lambda_i - \gamma)^k d_i B^{-1/2} B^{1/2} \boldsymbol{x}_i \boldsymbol{x}_i^\mathsf{H} B V \right| \\
		& \leq \sum_{i=t+1}^r |\lambda_i - \gamma|^k \frac{\left( \frac{\rho}{|\lambda_i - \gamma|} \right)^N}{1 - \left( \frac{\rho}{|\lambda_i - \gamma|} \right)^N} \left| B^{-1/2} \right| \left| B^{1/2} \boldsymbol{x}_i \right| \left| \boldsymbol{x}_i^\mathsf{H} B^{1/2} \right| \left| B^{1/2} V \right| \\
		& \leq \| B^{-1/2} \|_2 \| B^{1/2} V \|_2 \sum_{i=t+1}^r |\lambda_i - \gamma|^k \sum_{p=1}^\infty \left( \frac{\rho}{| \lambda_i - \gamma |} \right)^{pN} {\| B^{1/2} \boldsymbol{x}_i \|_2}^2 \\
		& = \| B^{-1} \|_2^{1/2} {\| V^\mathsf{H} B V \|_2}^{1/2} \sum_{i=t+1}^r \sum_{p=1}^\infty \rho^{pN} | \lambda_i - \gamma |^{-(pN-k)}  \\
		& \leq \left( \| B^{-1} \|_2 \| V^\mathsf{H} B V \|_2 \right)^{1/2}\sum_{i=t+1}^r \sum_{p=1}^\infty \rho^{pN} |\hat{\lambda} - \gamma|^{-(pN-k)} \\
		& = \left( \lambda_\mathrm{min}(B)^{-1} \| V^\mathsf{H} B V \|_\mathsf{F} \right)^{1/2} (r-t) |\hat{\lambda} - \gamma|^k \left( \frac{\left(\frac{\rho}{| \hat{\lambda} - \gamma|}\right)^N}{1 - \left( \frac{\rho}{|\hat{\lambda} - \gamma|} \right)^N} \right)
	\end{align}	
	for $i = t+1$, $t+2$, $\dots$, $r$.
	Here, we used the $B$-orthonormality of the eigenvectors~${\| B^{1/2} \boldsymbol{x}_i \|_2}^2 = {\boldsymbol{x}_i}^\mathsf{H} B \boldsymbol{x}_i = 1$.
\end{proof}

\begin{remark}
The evaluations~\eqref{eq:enclosure_S}, \eqref{eq:outer_S} can also be used for the Hankel matrix approach~\cite{ImakuraMorikuniTakayasu2019JCAM} for the evaluation of eigenvectors.
\end{remark}

\begin{remark}
	In Theorem~\ref{th:bound_S}, a Hermitian matrix $B$ is required to be positive definite for the verification of eigenvectors, contrarily to the verification of eigenvalues, cf.~Theorem~\ref{th:bound_M}.
\end{remark}

The evaluation of the numerical error~$| \tilde{S}^{(N)} - S^{(N)} |$ in \eqref{eq:enclosure_S}, i.e., $| \tilde{S}_k^{(N)} - S_k^{(N)} |$ for each $k = 0$, $1$, $\dots$, $m-1$, involves the error evaluation of the solution
\begin{align}
Y_j = (z_j B - A)^{-1} B V	
\label{eq:Yj}
\end{align}
of the linear system of equations with multiple right-hand sides~$(z_j B - A) Y_j = B V$ associated with
\begin{align}
	S_k^{(N)} = \frac{1}{N} \sum_{j=1}^N \exp ((k+1) \theta_j \mathrm{i}) Y_j
	\label{eq:SkN}
\end{align}
for $k = 0$, $1$, $\dots$, $m-1$.
The enclosure of $Y_j$ can be obtained by using standard verification methods, e.g.~\cite{Rump2013JCAMa,Rump2013JCAMb}.
For efficiency, the technique based on \cite[Theorem~4.1]{ImakuraMorikuniTakayasu2019JCAM} can be also used.

\subsection{Implementation}
We present implementation issues of the proposed method.
We assume that the numbers of $\ell$ and $m$ satisfy $\ell m = t$.
Also, the proposed method needs to determine the number of the parameter~$N$.
Each quadrature point~$z_j$ gives rise to a linear system~$(z_j B - A) Y_j = B V$ to solve.
The evaluation of a solution for each linear system is the most expensive part, whereas the quadrature errors~$|\mathsf{M}_\mathrm{out}^{(N)}|$ and $|S_\mathrm{out}^{(N)}|$ reduce as the number of quadrature points~$N$ increases (see Theorems~\ref{th:bound_M} and \ref{th:bound_S}).
To achieve efficient verification, it is favorable to evaluate solutions of the linear systems as few as possible.
Hence, there is a trade-off between the computational cost and quadrature error.
The number of quadrature points~$N$ has been heuristically determined in the complex moment eivensolvers for numerical computations.
For numerical verification, a reasonable number~$N$ can be determined according to the quadrature error.
The error bounds~\eqref{eq:bound_MkoutN} and \eqref{eq:outer_S} can be used to determine a reasonable number of quadrature points.
The least number of $N$ such that 
\begin{empheq}[left={N \geq \empheqlbrace}]{alignat=2}
\frac{1}{2} \left( \log \frac{\rho}{|\hat{\lambda}-\gamma|} \right)^{-1} \log \left(\frac{\delta}{c_1 (r-t) + \delta}\right) & \quad \text{for eigenvalues}, \label{eq:N4eigval}\\
\left( \log \frac{\rho}{|\hat{\lambda}-\gamma|} \right)^{-1} \log \left(\frac{\delta}{c_2 (r-t) + \delta}\right) & \quad \text{for eigevectors} \label{eq:N4eigvec}
\end{empheq}
yields a quadrature error less than $\delta$, i.e., $\left|\mathsf{M}_{k,\mathrm{out}}^{(N)}\right| \leq \delta$ and $\left|S_{k, \mathrm{out}}^{(N)}\right| \leq \delta$, respectively, at the least cost, where
\begin{align}
	c_1 & =  \| V^\mathsf{H} B V \|_\mathsf{F} \max_{k=0, 1, \dots, 2m-1} |\hat{\lambda} - \gamma |^k, \\
	c_2 & = \left( \| B^{-1} \|_2 \| V^\mathsf{H} B V \|_\mathsf{F} \right)^{1/2} \max_{k=0, 1, \dots, m-1} |\hat{\lambda} - \gamma |^k.
\end{align}
We summarize the above procedures in Algorithm~\ref{alg:verifiedSSRR}.
Here, we denote interval quantities with square brackets~$[\cdot]$ and the quantity in the right-hand sides of \eqref{eq:bound_MkoutN} and \eqref{eq:outer_S} by $\mu_{k, \mathrm{out}}^{(N)}$ and $\sigma_{k, \mathrm{out}}^{(N)}$, respectively.
The computation of line~3 of Algorithm~\ref{alg:verifiedSSRR} can be performed as follows~\cite[p.~7]{ImakuraMorikuniTakayasu2019JCAM}:
\begin{enumerate}
	\item Compute a numerical approximation~$\tilde{\lambda}$ of $\hat{\lambda}$ such that $|\tilde{\lambda} - \gamma| > \rho$, defined in Theorems~\ref{th:bound_M} and \ref{th:bound_S}.
	
	\item Set $c \in (0, 1)$ such that $\rho < c |\tilde{\lambda}-\gamma|$.	
	
	\item Verify regularity of the interval matrices~$A - [\gamma+\rho, \gamma+c |\tilde{\lambda}-\gamma|]B$ and $A - [\gamma-  c |\tilde{\lambda}-\gamma|, \gamma-\rho]B$, e.g., by using the INTLAB function~\texttt{isregular}.
	
	\item Adopt $c|\tilde{\lambda} - \gamma|$ as a lower bound of $|\hat{\lambda} - \gamma|$.
\end{enumerate}
To choose a possible large value of $c$, Steps~2--3 can be performed by using a bisection method.

\begin{algorithm}[t]
	\caption{Rayleigh--Ritz procedure approach.}
	\label{alg:verifiedSSRR}
	\begin{algorithmic}[1]
		\REQUIRE $A \in \mathbb{C}^{n \times n}$, $B \in \mathbb{C}^{n \times n}$, $\ell$, $m \in \mathbb{N}_+$ such that $t = \ell m$, $V \in \mathbb{C}^{n \times \ell}$, $\gamma$, $\rho \in \mathbb{R}$, and $\delta>0$.
		\ENSURE $[\lambda_i]$, $[\boldsymbol{x}_i ]$, $i = 1$, $2$, $\dots$, $t$
		\STATE Determine $N$ by using \eqref{eq:N4eigval} or \eqref{eq:N4eigvec}.
		\STATE Compute $[\theta_j] = [(2 j - 1)\pi / N ]$, $[z_j] = [\gamma + \rho \exp(\mathrm{i} [\theta_j])]$, $j = 1$, $2$, $\dots$, $N$
		\STATE Rigorously compute a lower bound of $|\hat{\lambda} - \gamma |=\min_{k=t+1,t+2, \dots,r}\left|\lambda_k - \gamma\right|$.
		\STATE Compute $[| \mathsf{M}_{k,\mathrm{out}}^{(N)} |]$ from $[\mu_{k, \mathrm{out}}^{(N)}]$, $k = 0$, $1$, $\dots$, $2m-1$.
		\STATE Compute $[Y_j]$ in \eqref{eq:Yj}, $j = 1$, $2$, $\dots$, $N$.
		\STATE Compute $[S_k^{(N)}] = \left\langle \tilde{S}_k^{(N)}, | \tilde{S}_k^{(N)} - S_k^{(N)} | \right\rangle$ using \eqref{eq:SkN}, $k = 0, 1, \dots, m-1$.
		\STATE Compute $[\mathsf{M}_k^{(N)}] = \left\langle \tilde{\mathsf{M}}_k^{(N)}, | \tilde{\mathsf{M}}_k^{(N)} - \mathsf{M}_k^{(N)} | \right\rangle$ using \eqref{eq:Mkl1N}, $k = 0, 1, \dots, 2m-1$.
		\STATE Compute $[ \mathsf{M}_{k,\mathrm{in}}^{(N)}]$ using \eqref{eq:enclosure4MkinN}, $k = 0$, $1$, $\dots$, $2m-1$.
		\STATE From $[ H_{m, \mathrm{in}}^{<, (N)} ]$, $[ H_{m, \mathrm{in}}^{(N)} ]$ from $\mathsf{M}_{k, \mathrm{in}}^{(N)}$, $k = 0, 1, \dots, 2m-1$.
		\STATE Compute the eigenvalue~$[\lambda_i]$ and eigenvector~$[\boldsymbol{y}_i]$ of the generalized eigenvalue problem~$[ H_{m, \mathrm{in}}^{<, (N)} ] \boldsymbol{y}_i = \lambda_i [ H_{m, \mathrm{in}}^{<, (N)} ] \boldsymbol{y}_i$, $i = 1, 2, \dots, t$.
		\STATE Rigorously compute an upper bound of $| S_{k, \mathrm{out}}^{(N)} |$ using $\sigma_{k, \mathrm{out}}^{(N)}$, $k = 0$, $1$, $\dots$, $m-1$.
		\STATE Form $| S_\mathrm{out}^{(N)} |$ using \eqref{eq:SoutN}, $\tilde{S}^{(N)} = [\tilde{S}_0^{(N)}, \tilde{S}_1^{(N)}, \dots, \tilde{S}_{M-1}^{(N)}]$, and $| \tilde{S}^{(N)} - S^{(N)}|$.
		\STATE Compute $[S_\mathrm{in}^{(N)}]$ using \eqref{eq:enclosure_S}.
		\STATE Compute $[\boldsymbol{x}_i] = [ S_\mathrm{in}^{(N)} \boldsymbol{y}_i ]$.
	\end{algorithmic}
\end{algorithm}

\section{Numerical experiments} \label{sec:nuex}
Numerical experiments show that the proposed method is superior to previous methods in terms of efficiency, while maintaining verification performance.
The efficiency is evaluated in terms of CPU time.
The performance of verification is evaluated in terms of the radii of the intervals of the verified eigenvalue and entries of the eigenvectors.

All computations are performed on a computer with an Intel Xeon Platinum 8176M 2.10 GHz central processing unit (CPU), 3 TB of random-access memory (RAM), and the Ubuntu 18.04.5 LTS operating system.
All programs are implemented and run in MATLAB Version~9.6.0.1335978 (R2019a) Update~8 for double precision floating-point arithmetic with unit roundoff~$u = 2^{-53} \simeq 1.1\cdot10^{-16}$.
We use INTLAB version~11~\cite{Rump1999} for interval arithmetic.
The compared methods are the combination of the MATLAB built-in function~\texttt{eigs} for the solution of the eigenvalue problem and INTLAB function~\texttt{verifyeig} for verification, which is denoted by \texttt{eigs}+\texttt{verifyeig}, and the Hankel matrix approaches in \cite{ImakuraMorikuniTakayasu2019JCAM}.
The matrix~$V \in \mathbb{R}^{n \times \ell}$ are generated by using the built-in MATLAB function~\texttt{randn}.
The tolerance of quadrature error~$\delta$ is set to $10^{-15}$.
The number of quadrature points~$N$ for the complex moment approaches is determined according to the criteria~\cite[(12)]{ImakuraMorikuniTakayasu2019JCAM} and \eqref{eq:N4eigval} for the verification of eigenvalues and \eqref{eq:N4eigvec} for the verification of eigenvectors.
The eigenvalues and eigenvectors of $[ H_{m, \mathrm{in}}^{<, (N)} ] \boldsymbol{y} = \lambda [H_{m, \mathrm{in}}^{(N)}] \boldsymbol{y}$ in line~9 of Algorithm~\ref{alg:verifiedSSRR} are verified by using the INTLAB function~\texttt{verifyeig}.
Here, \texttt{verifyeig} can deal with multiple and nearly multiple eigenvalues~\cite{Rump2001LAA}.
Note again that the number of eigenvalues in the interval~$\Omega$ is assumed to be given in advance.

\subsection{Efficiency}
To show an advantage of the proposed method in efficiency in terms of the CPU time,  we test on the problem with matrices
\begin{align}
	A = \mathrm{tridiag} (-1, 2, -1) \in \mathbb{R}^{n \times n}, \quad B = \mathrm{diag} (b_1, b_2, \dots, b_n) \in \mathbb{R}^{n \times n} \label{eq:tridiag}
\end{align}
with size~$n = 2^s$, $s = 5, 6, \dots, 16$, where $\mathrm{tridiag} (\cdot, \cdot, \cdot)$ denotes the tridiagonal Toeplitz matrix consisting of a triplet and the value of $b_i$ normally distributes with mean $1$ and variance $10^{-7}$.
The eigenvalue problem with the coefficient matrices~\eqref{eq:tridiag} models an one-dimensional harmonic oscillator consisting of $n$ mass points and $n+1$ springs.
See \cite[section~5]{ImakuraMorikuniTakayasu2019JCAM} for details.

\begin{figure}[t]
	\centering
	\begin{minipage}{0.48\columnwidth}
		\centering
		\includegraphics[width=0.99\linewidth]{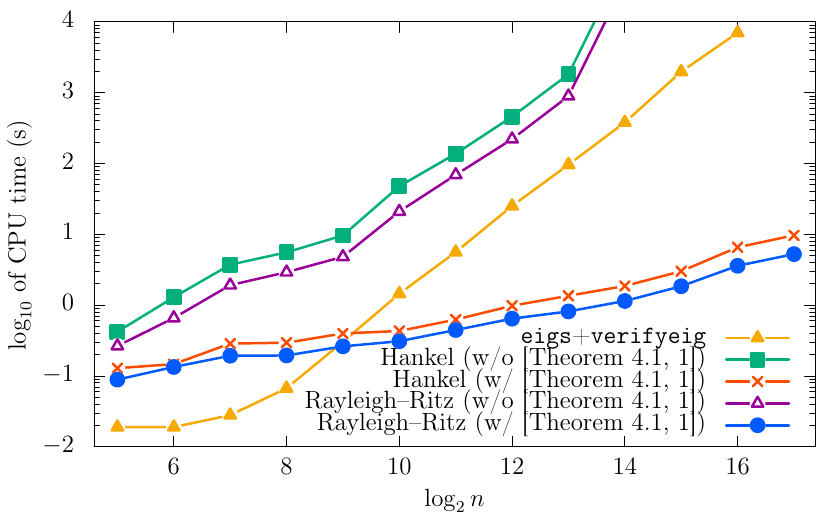}
		\caption{CPU time for each method versus the size~$n$ of the test problems with \eqref{eq:tridiag}.}
		\label{fig:N_vs_time}
	\end{minipage}
	\quad		
	\begin{minipage}{0.48\columnwidth}	
		\centering
		\includegraphics[width=0.99\linewidth]{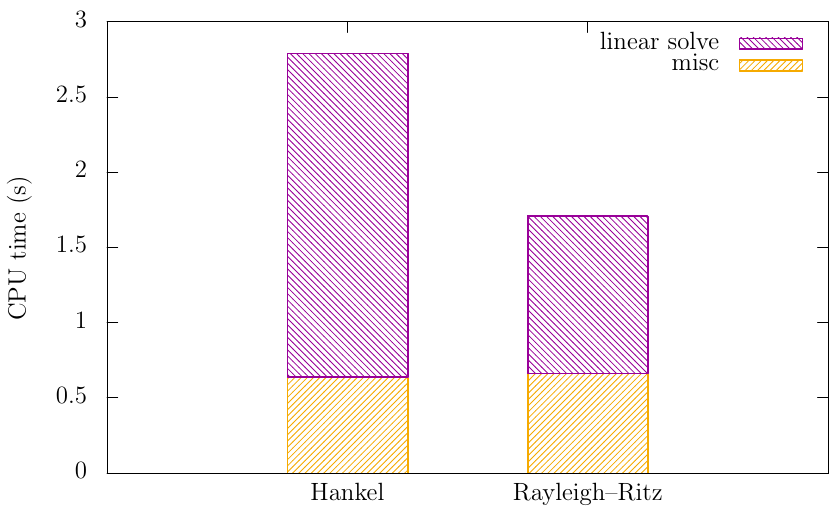}
		\caption{Breakdown of the CPU times for the Hankel matrix and Rayleigh--Ritz procedure approaches for the problem with \eqref{eq:tridiag} for $n = 2^{16}$.}
		\label{fig:stacked}
	\end{minipage}
\end{figure}

We compute and verify the four eigenvalues closest to two on the real axis so that we set the numbers of parameters~$\ell = m = 2$, i.e., $\ell m = 4$, and the contour~$\Gamma$ to a circle with center~$2$ on the real axis.
Perturbation theory of generalized Hermitian eigenvalue problems~\cite[Theorem~8.3]{Nakatsukasa2011th} gives the following bound between an eigenvalue $\lambda_i$ of \eqref{eq:gevp} and an eigenvalue $\lambda_i(A)$ of $A$:
\begin{align}
|\lambda_i (A) - \lambda_i| \leq |\lambda_i (A)| \| \Delta B \|_2 \| B^{-1} \|_2,
\end{align}
where $\Delta B = \mathrm{I} - B$.
Thus, a lower bound of $| \hat{\lambda} - \gamma |$ and radius~$\rho$ of $\Gamma$ are derived to enclose the four eigenvalues.

Figure~\ref{fig:N_vs_time} shows the elapsed CPU time for the proposed and compared methods versus the size of matrix pencils~\eqref{eq:tridiag}.
The Hankel matrix and Rayleigh--Ritz procedure approaches are tested when they use and do not use the technique based on \cite[Theorem~4.1]{ImakuraMorikuniTakayasu2019JCAM} for efficiently verifying the linear solve~\eqref{eq:Yj}.
The input arguments of \texttt{eigs} are set to compute the four eigenvalues closest to two on the real axis.
This figure shows that this technique substantially improves the efficiencies of these approaches in terms of the CPU time.
These approaches become faster than \texttt{eigs}+\texttt{verifyeig} for large cases with $s > 10$ and this is pronounced, as the matrix becomes large.
Further, the Rayleigh--Ritz procedure approach is nearly twice faster than the Hankel matrix approach.

To make a detailed observation, Figure~\ref{fig:stacked} shows the breakdown of the CPU times for the Hankel matrix and Rayleigh--Ritz procedure approaches using technique based on \cite[Theorem~4.1]{ImakuraMorikuniTakayasu2019JCAM} for the problem with \eqref{eq:tridiag} for $n = 2^{16}$.
The linear solve stands for the CPU time required to evaluate the linear solves with respect to the quadrature points, e.g., step~5 of Algorithm~\ref{alg:verifiedSSRR} for the Rayleigh--Ritz procedure approach.
The misc stands for the CPU time required for the other procedures.
This figure shows that the linear solve took more than half of the total CPU time and the Rayleigh--Ritz procedure approach results in twice faster CPU time than the Hankel matrix approach.
Since the $N$ linear solves can be evaluated independently, these approach can reduce the CPU time for the linear solves nearly to $1/N$ when implemented in parallel.
Evaluations of these performances in a parallel computer are left for future work.

Table~\ref{tbl:vev_art} gives the infimum and supremum of the verified eigenvalues for each number of $s$ for each method.
Here, the Hankel matrix and Rayleigh--Ritz procedure approaches employ the technique using \cite[Theorem~4.1]{ImakuraMorikuniTakayasu2019JCAM} for efficiently verifying the solution of the linear systems~\eqref{eq:Yj}.
Each row shows for each number of $s$, the infimum and supremum of the verified eigenvalues~$\lambda_1 \leq \lambda_2 \leq \lambda_3 \leq \lambda_4$.
In each subtable, each row gives digits that are the same as those of the exact eigenvalues in a single line and digits that mean the supremum and infimum of the exact eigenvalues in double lines.
The number of quadrature points~$N$ for the complex moment approaches is given in the second column.
These tables show that as $s$ increases, the number of correct digits tends to decrease and the required number of quadrature points tends to increase for the complex moment approaches.
The Hankel matrix approach tend to give more correct digits than the Rayleigh--Ritz approach.
Even as $s$ increases, \texttt{eigs}+\texttt{verifyeig} gives almost fully correct digits.
The Rayleigh--Ritz procedure approach requires half the number of quadrature points for the Hankel matrix approach.

Table~\ref{tbl:tridiag_eigvec} gives the maximum of the verified radii of the entries of the eigenvectors corresponding to the eigenvalues near $2$ for the test problems with \eqref{eq:tridiag}.
Here, the Hankel matrix and Rayleigh--Ritz procedure approaches employ the technique using \cite[Theorem~4.1]{ImakuraMorikuniTakayasu2019JCAM} for efficiently verifying the solution of the linear systems~\eqref{eq:Yj}.
In each subtable, each column shows for each number of $s$, the radius of the eigenvectors~$\boldsymbol{x}_1, \boldsymbol{x}_2, \boldsymbol{x}_3$, and $\boldsymbol{x}_4$ corresponding to $\lambda_1$, $\lambda_2$, $\lambda_3$, and $\lambda_4$.
These tables show that as the size~$n = 2^s$ of the problem increases, the maximum interval radius tends to increase.

\begin{remark}
In the above observations, the Rayleigh--Ritz procedure approach tends to give larger interval radii than the others.
A reason for this deterioration is that the enclosure of $M_{k, \mathrm{in}}^{(N)}$ is obtained from $|\mathsf{M}_k^{(N)} - \tilde{\mathsf{M}}_k^{(N)}|$ due to \eqref{eq:enclosure4MkinN}.
The latter is computed by \eqref{eq:Mkl1N}, which suffers rounding errors occurring in the solution~$Y_j$.
The enclosures of both~$\tilde{S}_i^{(N)}$ and $\tilde{S}_j^{(N)}$ affect the enclosure of $\tilde{\mathsf{M}}_{i+j+1}^{(N)}$.
This leads to increases of the interval radii of the coefficient matrices of the reduced eigenvalue problem~$[ H_{m, \mathrm{in}}^{<, (N)} ] \boldsymbol{y} = \lambda [H_{m, \mathrm{in}}^{(N)}] \boldsymbol{y}$ and the interval redii of the verified eigenpairs, as a by-product.
A remedy for improving the accuracy of the solution is to use iterative refinements~\cite{OishiOgitaRump2009}.
Meanwhile, the Hankel matrix approach suffers rounding errors in the computation of single complex moments.
Note that the truncation errors of quadrature for both complex moment approaches are in the same order in this experiment.
\end{remark}

\begin{table}[!ht]
	\caption{Infimum and supremum of the verified four eigenvalues near $2$ for the test problems with \eqref{eq:tridiag}.}
	\label{tbl:vev_art}
	\centering
	\begin{subtable}{\textwidth}
		\caption{\texttt{eigs}+\texttt{verifyeig}.}
		\scriptsize
		\centering
		\begin{tabular}{c|llll}
			$s$ & \multicolumn{1}{c}{$\lambda_1$} & \multicolumn{1}{c}{$\lambda_2$} & \multicolumn{1}{c}{$\lambda_3$} & \multicolumn{1}{c}{$\lambda_4$} \\
			\hline
			5 & $1.71537033235013_{4}^{5}$ & $1.90483618825803_{5}^{6}$ & $2.095163853542021$ & $2.28462968839571_{2}^{3}$ \\[1mm]
			6 & $1.855130417880838$ & $1.951672558144470$ & $2.04832754149673_{6}^{7}$ & $2.14486962137112_{3}^{4}$ \\[1mm]
			7 & $1.92695596486600_{8}^{9}$ & $1.97564720057065_{6}^{7}$ & $2.02435286052257_{4}^{5}$ & $2.07304405792919_{0}^{1}$ \\[1mm]
			8 & $1.963329774559972$ & $1.98777597272563_{8}^{9}$ & $2.01222401227200_{1}^{2}$ & $2.03667024056232_{6}^{7}$ \\[1mm]
			9 & $1.98162836857217_{6}^{7}$ & $1.99387604508410_{3}^{4}$ & $2.00612395050725_{4}^{5}$ & $2.01837162549658_{7}^{8}$ \\[1mm]
			10 & $1.990805123444199$ & $1.99693503831713_{8}^{9}$ & $2.00306497281993_{5}^{6}$ & $2.00919486928309_{4}^{5}$ \\[1mm]
			11 & $1.9954003070059_{79}^{80}$ & $1.99846676513378_{4}^{5}$ & $2.00153322910796_{7}^{8}$ & $2.00459969172964_{8}^{9}$ \\[1mm]
			12 & $1.99769958798105_{7}^{8}$ & $1.9992332015574_{19}^{20}$ & $2.00076680789580_{2}^{3}$ & $2.00230040607660_{5}^{6}$ \\[1mm]
			13 & $1.99884965651098_{8}^{9}$ & $1.99961655578429_{2}^{3}$ & $2.000383452559825$ & $2.0011503467218_{89}^{90}$ \\[1mm]
			14 & $1.999424794537495$ & $1.99980826342074_{8}^{9}$ & $2.000191735211724$ & $2.00057520989805_{7}^{8}$ \\[1mm]
			15 & $1.99971238742483_{2}^{3}$ & $1.999904128280850$ & $2.00009587002768_{6}^{7}$ & $2.00028761266374_{7}^{8}$ \\[1mm]
			16 & $1.999856191688560$ & $1.999952062997465$ & $2.00004793533377_{7}^{8}$ & $2.00014380869740_{0}^{1}$ \\
		\end{tabular}
	\end{subtable}
	\medskip
	
	\begin{subtable}{\textwidth}
		\caption{Hankel matrix approach.}
		\scriptsize
		\centering
		\begin{tabular}{c|cllll}		
			$s$ & $N$ & \multicolumn{1}{c}{$\lambda_1$} & \multicolumn{1}{c}{$\lambda_2$} & \multicolumn{1}{c}{$\lambda_3$} & \multicolumn{1}{c}{$\lambda_4$} \\
			\hline		
			5 & 76 & $1.7153703323_{47914}^{52348}$ & $1.9048361882_{47090}^{68985}$ & $2.0951638535_{33456}^{50593}$ & $2.284629688_{387397}^{404012}$ \\[1mm]
			6 & 78 & $1.8551304178_{76541}^{85160}$ & $1.951672558_{097438}^{193924}$ & $2.048327541_{394587}^{603956}$ & $2.1448696213_{48672}^{93821}$ \\[1mm]
			7 & 78 & $1.9269559648_{61249}^{70775}$ & $1.975647200_{456114}^{685677}$ & $2.024352860_{495618}^{550165}$ & $2.073044057_{888081}^{970314}$ \\[1mm]
			8 & 80 & $1.9633297745_{36408}^{83632}$ & $1.9877759727_{20604}^{30672}$ & $2.0122240122_{63992}^{80092}$ & $2.036670240_{510471}^{614189}$ \\[1mm]
			9 & 82 & $1.981628368_{531830}^{612764}$ & $1.99387604508_{0758}^{7442}$ & $2.006123950_{499947}^{514671}$ & $2.018371625_{469633}^{523545}$ \\[1mm]
			10 & 82 & $1.9908051234_{10658}^{77897}$ & $1.99693503_{7920041}^{8714281}$ & $2.003064972_{699502}^{941428}$ & $2.009194869_{235890}^{330294}$ \\[1mm]
			11 & 84 & $1.99540030_{6918166}^{7094033}$ & $1.99846676_{4963604}^{5303971}$ & $2.001533229_{073259}^{142923}$ & $2.00459969_{1350860}^{2108434}$ \\[1mm]
			12 & 86 & $1.9976995_{82687250}^{93275413}$ & $1.99923320_{0602390}^{2514280}$ & $2.00076680_{7032535}^{8765280}$ & $2.00230040_{2673471}^{9504552}$ \\[1mm]
			13 & 88 & $1.99884965_{3826925}^{9203680}$ & $1.99961655_{4098269}^{7479218}$ & $2.0003834_{48671582}^{56458023}$ & $2.0011503_{39850697}^{53592375}$ \\[1mm]
			14 & 88 & $1.9994247_{89407008}^{99668297}$ & $1.99980826_{2638995}^{4202715}$ & $2.00019173_{4398198}^{6025405}$ & $2.0005752_{08927021}^{10869043}$ \\[1mm]
			15 & 90 & $1.99971238_{5326499}^{9523053}$ & $1.9999041_{23444278}^{33118563}$ & $2.0000958_{68911830}^{71143737}$ & $2.00028761_{0469643}^{4857982}$ \\[1mm]
			16 & 92 & $1.999856_{166646706}^{216719115}$ & $1.99995206_{0260821}^{5734205}$ & $2.00004793_{1154953}^{9511890}$ & $2.0001438_{02072102}^{15323321}$ \\
		\end{tabular}
	\end{subtable}
	\medskip
	
	\begin{subtable}{\textwidth}
		\caption{Rayleigh--Ritz procedure approach.}
		\scriptsize
		\centering
		\begin{tabular}{c|cllll}
			$s$ & $N$ & \multicolumn{1}{c}{$\lambda_1$} & \multicolumn{1}{c}{$\lambda_2$} & \multicolumn{1}{c}{$\lambda_3$} & \multicolumn{1}{c}{$\lambda_4$} \\
			\hline	
			5 & 38 & $1.7153703323_{45711}^{54559}$ & $1.9048361882_{42155}^{73914}$ & $2.0951638535_{28928}^{55114}$ & $2.284629688_{377201}^{414223}$ \\[1mm]
			6 & 40 & $1.8551304178_{68328}^{93347}$ & $1.951672558_{045605}^{243334}$ & $2.048327541_{234644}^{758824}$ & $2.144869621_{272423}^{469818}$ \\[1mm]
			7 & 40 & $1.9269559648_{44142}^{87875}$ & $1.975647200_{268550}^{872767}$ & $2.024352860_{457115}^{588034}$ & $2.07304405_{7758953}^{8099429}$ \\[1mm]
			8 & 40 & $1.963329774_{431743}^{688207}$ & $1.987775972_{681278}^{769999}$ & $2.012224012_{233492}^{310514}$ & $2.03667024_{0113744}^{1010910}$ \\[1mm]
			9 & 42 & $1.98162836_{7744105}^{9400253}$ & $1.993876045_{024694}^{143511}$ & $2.006123950_{366358}^{648153}$ & $2.01837162_{4981080}^{6012101}$ \\[1mm]
			10 & 42 & $1.99080512_{2384967}^{4503434}$ & $1.9969350_{31603472}^{45030815}$ & $2.00306497_{0577924}^{5061955}$ & $2.0091948_{68359327}^{70206860}$ \\[1mm]
			11 & 42 & $1.99540030_{4355702}^{9656255}$ & $1.99846676_{0585749}^{9681816}$ & $2.0015332_{28163114}^{30052822}$ & $2.004599_{681042095}^{702417197}$ \\[1mm]
			12 & 44 & $1.997699_{233631056}^{942331091}$ & $1.999233_{151035052}^{252079796}$ & $2.000766_{763701373}^{852090242}$ & $2.002300_{208338236}^{603815044}$ \\[1mm]
			13 & 44 & $1.998849_{403804694}^{909217316}$ & $1.999616_{448114726}^{663453885}$ & $2.000383_{184381851}^{720737856}$ & $2.0011_{49721291246}^{50972152649}$ \\[1mm]
			14 & 44 & $1.99942_{4430921692}^{5158153283}$ & $1.999808_{225050437}^{301791060}$ & $2.000191_{686133423}^{784290023}$ & $2.000575_{100797855}^{318998258}$ \\[1mm]
			15 & 46 & $1.999712_{114463508}^{660386159}$ & $1.99990_{3651534491}^{4605027210}$ & $2.000095_{765192997}^{974862376}$ & $2.000287_{333375641}^{891951853}$ \\[1mm]
			16 & 46 & $1.9998_{49968886786}^{62414490342}$ & $1.99995_{1587371242}^{2538623688}$ & $2.00004_{7410478972}^{8460188579}$ & $2.00014_{2777979709}^{4839415086}$ \\
		\end{tabular}
	\end{subtable}
\end{table}

\begin{table}[!ht]
	\caption{Maximum radii of the entries of the verified eigenvectors corresponding to the eigenvalues near $2$ for the test problems with \eqref{eq:tridiag}.}
	\begin{subtable}{0.49\textwidth}
		\caption{\texttt{eigs+verifyeig}.}
		\scriptsize
		\centering
		\begin{tabular}{c|llll}		
			$s$ & \multicolumn{1}{c}{$\boldsymbol{x}_1$} & \multicolumn{1}{c}{$\boldsymbol{x}_2$} & \multicolumn{1}{c}{$\boldsymbol{x}_3$} & \multicolumn{1}{c}{$\boldsymbol{x}_4$} \\
			\hline
			5 & \texttt{4.17e-16} & \texttt{5.56e-16} & \texttt{8.61e-16} & \texttt{6.11e-16} \\[1mm]
			6 & \texttt{6.11e-16} & \texttt{8.89e-16} & \texttt{1.28e-15} & \texttt{8.89e-16} \\[1mm]
			7 & \texttt{7.50e-16} & \texttt{1.10e-15} & \texttt{1.67e-15} & \texttt{1.66e-15} \\[1mm]
			8 & \texttt{1.18e-15} & \texttt{1.75e-15} & \texttt{2.56e-15} & \texttt{1.71e-15} \\[1mm]
			9 & \texttt{1.47e-15} & \texttt{2.18e-15} & \texttt{3.31e-15} & \texttt{2.23e-15} \\[1mm]
			10 & \texttt{2.34e-15} & \texttt{3.50e-15} & \texttt{5.10e-15} & \texttt{3.41e-15} \\[1mm]
			11 & \texttt{2.91e-15} & \texttt{4.36e-15} & \texttt{6.62e-15} & \texttt{6.62e-15} \\[1mm]
			12 & \texttt{4.66e-15} & \texttt{6.98e-15} & \texttt{1.02e-14} & \texttt{1.02e-14} \\[1mm]			
			13 & \texttt{5.81e-15} & \texttt{8.71e-15} & \texttt{1.33e-14} & \texttt{1.33e-14} \\[1mm]			
			14 & \texttt{1.40e-14} & \texttt{1.40e-14} & \texttt{2.04e-14} & \texttt{1.36e-14} \\[1mm]
			15 & \texttt{1.75e-14} & \texttt{1.75e-14} & \texttt{2.65e-14} & \texttt{1.77e-14} \\[1mm]
			16 & \texttt{2.80e-14} & \texttt{2.80e-14} & \texttt{4.08e-14} & \texttt{2.72e-14} \\
		\end{tabular}
		\label{tbl:verifyeig_eigvec}
	\end{subtable}
	\begin{subtable}{0.49\textwidth}
		\caption{Hankel matrix approach.}
		\scriptsize
		\centering
		\begin{tabular}{c|llll}
			$s$ & \multicolumn{1}{c}{$\boldsymbol{x}_1$} & \multicolumn{1}{c}{$\boldsymbol{x}_2$} & \multicolumn{1}{c}{$\boldsymbol{x}_3$} & \multicolumn{1}{c}{$\boldsymbol{x}_4$} \\
			\hline
			5 & \texttt{1.17e-11} & \texttt{8.44e-12} & \texttt{1.07e-11} & \texttt{1.40e-11} \\[1mm]
			6 & \texttt{2.81e-10} & \texttt{1.15e-10} & \texttt{1.09e-10} & \texttt{2.48e-10} \\[1mm]
			7 & \texttt{7.56e-10} & \texttt{9.67e-11} & \texttt{1.57e-10} & \texttt{1.21e-09} \\[1mm]
			8 & \texttt{7.79e-11} & \texttt{6.47e-10} & \texttt{1.44e-10} & \texttt{2.27e-10} \\[1mm]
			9 & \texttt{6.37e-10} & \texttt{8.14e-10} & \texttt{2.68e-09} & \texttt{3.95e-10} \\[1mm]
			10 & \texttt{3.89e-08} & \texttt{1.15e-08} & \texttt{2.00e-08} & \texttt{2.39e-08} \\[1mm]
			11 & \texttt{2.60e-08} & \texttt{4.63e-08} & \texttt{3.81e-08} & \texttt{2.33e-08} \\[1mm]
			12 & \texttt{1.44e-06} & \texttt{4.28e-07} & \texttt{5.01e-07} & \texttt{1.01e-06} \\[1mm]
			13 & \texttt{2.44e-06} & \texttt{1.37e-06} & \texttt{1.07e-06} & \texttt{1.96e-06} \\[1mm]
			14 & \texttt{2.00e-06} & \texttt{1.88e-06} & \texttt{3.75e-06} & \texttt{2.94e-06} \\[1mm]
			15 & \texttt{2.66e-05} & \texttt{1.72e-06} & \texttt{1.03e-05} & \texttt{1.71e-05} \\[1mm]
			16 & \texttt{2.44e-05} & \texttt{6.38e-05} & \texttt{3.14e-05} & \texttt{3.17e-05} \\
		\end{tabular}
		\label{tbl:Hankel_eigvec}
	\end{subtable}
	\bigskip
	\begin{subtable}{\textwidth}
		\caption{Rayleigh--Ritz procedure approach.}
		\scriptsize
		\centering
		\begin{tabular}{c|llll}
			$s$ & \multicolumn{1}{c}{$\boldsymbol{x}_1$} & \multicolumn{1}{c}{$\boldsymbol{x}_2$} & \multicolumn{1}{c}{$\boldsymbol{x}_3$} & \multicolumn{1}{c}{$\boldsymbol{x}_4$} \\
			\hline
			5 & \texttt{1.32e-11} & \texttt{9.41e-12} & \texttt{1.19e-11} & \texttt{1.55e-11} \\[1mm] 
			6 & \texttt{3.18e-10} & \texttt{1.30e-10} & \texttt{1.24e-10} & \texttt{2.82e-10} \\[1mm] 
			7 & \texttt{8.53e-10} & \texttt{1.11e-10} & \texttt{1.81e-10} & \texttt{1.38e-09} \\[1mm] 
			8 & \texttt{8.69e-11} & \texttt{7.22e-10} & \texttt{1.62e-10} & \texttt{2.54e-10} \\[1mm] 
			9 & \texttt{6.76e-10} & \texttt{8.24e-10} & \texttt{2.86e-09} & \texttt{3.96e-10} \\[1mm] 
			10 & \texttt{4.07e-08} & \texttt{1.20e-08} & \texttt{2.09e-08} & \texttt{2.50e-08} \\[1mm] 
			11 & \texttt{2.56e-08} & \texttt{4.45e-08} & \texttt{3.63e-08} & \texttt{2.23e-08} \\[1mm] 
			12 & \texttt{1.08e-06} & \texttt{3.23e-07} & \texttt{3.97e-07} & \texttt{8.05e-07} \\[1mm] 
			13 & \texttt{1.84e-06} & \texttt{1.03e-06} & \texttt{7.99e-07} & \texttt{1.48e-06} \\[1mm] 
			14 & \texttt{1.38e-06} & \texttt{1.28e-06} & \texttt{2.56e-06} & \texttt{2.06e-06} \\[1mm] 
			15 & \texttt{1.60e-05} & \texttt{1.16e-06} & \texttt{6.61e-06} & \texttt{9.84e-06} \\[1mm] 
			16 & \texttt{1.35e-05} & \texttt{3.61e-05} & \texttt{1.71e-05} & \texttt{1.76e-05} \\
		\end{tabular}
		\label{tbl:RR_eigvec}
	\end{subtable}
	\label{tbl:tridiag_eigvec}
\end{table}

\subsection{Multiple eigenvalue}
To show the verification performance of the proposed method in the presence of multiple eigenvalues, we test on the problem with matrices
\begin{align}
	A = \mathrm{diag}(0, 0, \dots, 0, 1, 1, 1+\varepsilon, 2, 3, 4) \in \mathbb{R}^{n \times n}, \quad B = \mathrm{I}_n, \quad \varepsilon = 10^{-s}, \quad s = 1, 2, \dots, \quad n = 100,
	\label{eq:multiple}
\end{align}
which has eigenvalues~$0$ with multiplicity $n-6$, $1$ with multiplicity~$2$, $1+\varepsilon$ with multiplicity~$1$ for $\varepsilon \ne 0$, and simple eigenvalues~$2$, $3$, and $4$.
The verified eigenvalues of interest are located in a circle with center~$2.5$ and radius~$2$, i.e., six eigenvalues exist in the circle.
Hence, we set the values of parameters~$\ell = 3$ and $m = 2$, i.e., $\ell m = 6$.
A rigorous bound of the quantity~$| \hat{\lambda} - \gamma |$ required in line~3 of Algorithm~\ref{alg:verifiedSSRR} is computed by using the INTLAB function~\texttt{isregular} hereafter.
The solutions of linear systems~$(z_j B - A) Y_j = BV$, $j = 1, 2, \dots, N$, are rigorously evaluated in line~4 of Algorithm~\ref{alg:verifiedSSRR} by using MATLAB function~\texttt{mldivide} hereafter.

Table~\ref{tbl:eigval_multiple} gives the interval radii of the verified eigenvalues for the test problem with \eqref{eq:multiple} for $s = 1, 2, \dots, 8$.
Table~\ref{tbl:eigvec_multiple} gives the maximum interval radii of the entries of the verified eigenvectors for the test problem with \eqref{eq:multiple} for $s = 1, 2, \dots, 8$.
These tables show that the proposed method works and is robust even in the presence of multiple and nearly multiple eigenvalues.
As the number of $s$ increases, the interval radii of the verified eigenpairs tend 
\begin{table}[ht!]
	\caption{Interval radii of the verified eigenvalues for the test problems with \eqref{eq:multiple} with multiple eigenvalues.}
	\label{tbl:eigval_multiple}
	\scriptsize
	\centering
	\begin{tabular}{c|llllll}		
		& \multicolumn{6}{c}{true eigenvalue} \\
		$s$ & \multicolumn{1}{c}{$1$} & \multicolumn{1}{c}{$1$} & \multicolumn{1}{c}{$1+\varepsilon$} & \multicolumn{1}{c}{$2$} & \multicolumn{1}{c}{$2$} & \multicolumn{1}{c}{$2$} \\
		\hline
		1 & \texttt{1.06e-10} & \texttt{1.06e-10} & \texttt{7.86e-13} & \texttt{2.83e-11} & \texttt{1.30e-11} & \texttt{5.03e-12} \\[1mm]
		2 & \texttt{1.05e-10} & \texttt{1.05e-10} & \texttt{7.33e-13} & \texttt{2.81e-11} & \texttt{1.37e-11} & \texttt{5.03e-12} \\[1mm]
		3 & \texttt{1.05e-10} & \texttt{1.06e-10} & \texttt{7.30e-13} & \texttt{2.82e-11} & \texttt{1.38e-11} & \texttt{5.04e-12} \\[1mm]
		4 & \texttt{1.08e-10} & \texttt{1.08e-10} & \texttt{7.29e-13} & \texttt{2.81e-11} & \texttt{1.38e-11} & \texttt{5.03e-12} \\[1mm]
		5 & \texttt{1.05e-10} & \texttt{1.05e-10} & \texttt{7.29e-13} & \texttt{2.81e-11} & \texttt{1.38e-11} & \texttt{5.02e-12} \\[1mm]
		6 & \texttt{1.08e-10} & \texttt{1.08e-10} & \texttt{7.29e-13} & \texttt{2.81e-11} & \texttt{1.38e-11} & \texttt{5.03e-12} \\[1mm]
		7 & \texttt{1.06e-10} & \texttt{1.06e-10} & \texttt{7.32e-13} & \texttt{2.82e-11} & \texttt{1.38e-11} & \texttt{5.03e-12} \\[1mm]
		8 & \texttt{1.11e-10} & \texttt{1.11e-10} & \texttt{7.69e-13} & \texttt{2.82e-11} & \texttt{1.38e-11} & \texttt{5.03e-12} \\[1mm]
		9 & \texttt{6.67e-10} & \texttt{6.67e-10} & \texttt{6.67e-10} & \texttt{2.80e-11} & \texttt{1.37e-11} & \texttt{5.02e-12} \\[1mm]
		10 & \texttt{1.38e-10} & \texttt{1.38e-10} & \texttt{1.38e-10} & \texttt{2.80e-11} & \texttt{1.37e-11} & \texttt{5.04e-12} \\[1mm]
		11 & \texttt{1.08e-10} & \texttt{1.08e-10} & \texttt{1.08e-10} & \texttt{2.79e-11} & \texttt{1.37e-11} & \texttt{5.01e-12} \\[1mm]
		12 & \texttt{1.09e-10} & \texttt{1.09e-10} & \texttt{1.09e-10} & \texttt{2.80e-11} & \texttt{1.37e-11} & \texttt{5.02e-12} \\[1mm]
		13 & \texttt{1.09e-10} & \texttt{1.09e-10} & \texttt{1.09e-10} & \texttt{2.79e-11} & \texttt{1.37e-11} & \texttt{5.01e-12} \\[1mm]
		14 & \texttt{1.04e-10} & \texttt{1.04e-10} & \texttt{1.04e-10} & \texttt{2.80e-11} & \texttt{1.37e-11} & \texttt{5.02e-12} \\[1mm]
		15 & \texttt{1.06e-10} & \texttt{1.06e-10} & \texttt{1.06e-10} & \texttt{2.79e-11} & \texttt{1.37e-11} & \texttt{5.00e-12} \\[1mm]
		16 & \texttt{1.05e-10} & \texttt{1.05e-10} & \texttt{1.05e-10} & \texttt{2.79e-11} & \texttt{1.37e-11} & \texttt{5.01e-12} \\
	\end{tabular}
	\caption{Maximum radii of the entries of the verified eigenvectors for the test problems with \eqref{eq:multiple} with multiple eigenvalues.}
	\label{tbl:eigvec_multiple}
	\scriptsize
	\centering
	\begin{tabular}{c|llllll}		
		& \multicolumn{6}{c}{true eigenvalue} \\
		$s$ & \multicolumn{1}{c}{$1$} & \multicolumn{1}{c}{$1$} & \multicolumn{1}{c}{$1+\varepsilon$} & \multicolumn{1}{c}{$2$} & \multicolumn{1}{c}{$2$} & \multicolumn{1}{c}{$2$} \\
		\hline
		1 & \texttt{7.51e-11} & \texttt{8.94e-11} & \texttt{9.60e-10} & \texttt{1.68e-10} & \texttt{1.99e-10} & \texttt{6.26e-11} \\[1mm]
		2 & \texttt{1.33e-10} & \texttt{4.03e-11} & \texttt{1.53e-11} & \texttt{1.64e-10} & \texttt{2.02e-10} & \texttt{6.21e-11} \\[1mm]
		3 & \texttt{1.15e-10} & \texttt{2.81e-11} & \texttt{1.07e-11} & \texttt{1.64e-10} & \texttt{2.02e-10} & \texttt{6.19e-11} \\[1mm]
		4 & \texttt{1.08e-10} & \texttt{3.82e-11} & \texttt{1.04e-11} & \texttt{1.64e-10} & \texttt{2.02e-10} & \texttt{6.21e-11} \\[1mm]
		5 & \texttt{8.09e-11} & \texttt{8.13e-11} & \texttt{1.03e-11} & \texttt{1.64e-10} & \texttt{2.02e-10} & \texttt{6.20e-11} \\[1mm]
		6 & \texttt{9.31e-11} & \texttt{6.74e-11} & \texttt{1.04e-11} & \texttt{1.64e-10} & \texttt{2.03e-10} & \texttt{6.22e-11} \\[1mm]
		7 & \texttt{1.12e-10} & \texttt{2.92e-11} & \texttt{1.04e-11} & \texttt{1.64e-10} & \texttt{2.02e-10} & \texttt{6.20e-11} \\[1mm]
		8 & \texttt{1.13e-10} & \texttt{2.64e-11} & \texttt{1.03e-11} & \texttt{1.64e-10} & \texttt{2.01e-10} & \texttt{6.17e-11} \\[1mm]
		9 & \texttt{5.66e-11} & \texttt{9.96e-11} & \texttt{1.03e-11} & \texttt{1.64e-10} & \texttt{2.01e-10} & \texttt{6.19e-11} \\[1mm]
		10 & \texttt{1.14e-10} & \texttt{2.25e-11} & \texttt{1.03e-11} & \texttt{1.64e-10} & \texttt{2.02e-10} & \texttt{6.20e-11} \\[1mm]
		11 & \texttt{5.02e-11} & \texttt{1.04e-10} & \texttt{1.04e-11} & \texttt{1.64e-10} & \texttt{2.02e-10} & \texttt{6.22e-11} \\[1mm]
		12 & \texttt{1.12e-10} & \texttt{2.96e-11} & \texttt{1.03e-11} & \texttt{1.64e-10} & \texttt{2.02e-10} & \texttt{6.20e-11} \\[1mm]
		13 & \texttt{1.12e-10} & \texttt{3.05e-11} & \texttt{1.01e-11} & \texttt{1.64e-10} & \texttt{2.02e-10} & \texttt{6.20e-11} \\[1mm]
		14 & \texttt{1.12e-10} & \texttt{2.91e-11} & \texttt{1.12e-11} & \texttt{1.64e-10} & \texttt{2.02e-10} & \texttt{6.21e-11} \\[1mm]
		15 & \texttt{1.08e-10} & \texttt{5.13e-12} & \texttt{3.74e-11} & \texttt{1.64e-10} & \texttt{2.02e-10} & \texttt{6.19e-11} \\[1mm]
		16 & \texttt{1.12e-10} & \texttt{3.01e-11} & \texttt{7.65e-12} & \texttt{1.64e-10} & \texttt{2.02e-10} & \texttt{6.20e-11} \\ 
	\end{tabular}
\end{table}
to increase.
Even when the number of $s$ is large, the interval radii do not deteriorate.
The proposed method gives verified multiple eigenvalues~$1$ and $2$ whose interval radii are of order up to $10^{-10}$ and the corresponding verified eigenvectors whose entries have maximum interval radii of order up to $10^{-10}$.
Similar trends are observed for the Hankel matrix approach.

\subsection{Effect of the condition number of \texorpdfstring{$B$}{B}}
To show the verification performance of the proposed method for varying the condition number of $B$, we test on the test matrix pencil~$z B - A$ with matrices
\begin{align}
A  = \mathrm{pentadiag} (1, 2, 3, 2, 1) \in \mathbb{R}^{100 \times 100}, \quad B = \mathrm{diag} (1, 1, \dots, 1, b_{100}) \in \mathbb{R}^{100 \times 100}, \label{eq:pentadiag}
\end{align}
where $\mathrm{pentadiag} (\cdot, \cdot, \cdot)$ denotes the pentadiagonal Toeplitz matrix consisting of a pentuple.
To see the effect of the condition number of $B$ on verification performance, the value of an entry~$b_{100}$ varies among $0$, $10^{-16}$, $10^{-15}$, \dots, $10^0$, i.e., the condition number of $B$ associated with the Euclidean norm is $1$, $10^{16}$, $10^{15}$, \dots, $1$, respectively.
There exist exactly six eigenvalues in the interval~$[0.95, 1.05]$ on the real axis and we compute and verify these eigenvalues so that we set the numbers of parameters~$\ell = 3$, $m = 2$ and the interval~$\Omega = [0.95, 1.05]$.
The input arguments of \texttt{eigs} are set to compute the six eigenvalues closest to one on the real axis.

Figure~\ref{fig:rad_eigval} shows the radius of the verified inclusion of each eigenvalue versus the value of $b_{100}$.
We determine the smallest $N$ that satisfies \eqref{eq:N4eigval}.
This figure shows that \texttt{eigs}+\texttt{verifyeig} gives the smallest radius, while the Rayleigh--Ritz procedure approach gives the largest radius.
The interval radii slightly increase for $b_{100} = 10^{-2}$ and $1$.

Figure~\ref{fig:rad_eigvec} shows the maximum interval radius of the entries of the verified eigenvector versus the value of $b_{100}$.
We determine the smallest $N$ that satisfies \eqref{eq:N4eigvec}.
This figure shows that \texttt{eigs}+\texttt{verifyeig} gives the smallest radius, while the Rayleigh--Ritz procedure approach gives the largest radius, similarly to the case of verifying eigenvalues.
The maximum interval radii slightly increase for $b_{100} = 10^{-2}$ and $1$.
These results show that the complex moment approaches work when the matrix~$B$ is ill-conditioned and even semidefinite.
Note that the horizontal axes in the above figures use the logarithmic scale so that the plots for $b_{100} = 0$ are presented for $\log_{10} b_{100} = - \infty$ for convenience.

\subsection{Nearly singular matrix pencils}
To show the verification performance of the proposed method when applied to nearly singular pencils, we test on the problem with
\begin{align}
	A = \mathrm{diag}(0, 1, 2, \dots, n-1) \in \mathbb{R}^{n \times n}, \quad
	B = \varepsilon \oplus \mathrm{I}_{n-1}, \quad n = 100,
	\label{eq:nearlysingular}
\end{align}
which form a nearly singular pencil~$zB-A$ for a small value of $\varepsilon > 0$, as $\det (zB-A) = \varepsilon \prod_{i=0}^{n-1} (z-i)$.
Suppose that the eigenvalues of interest are located in a circle with center~$3$ and radius~$3$.
Hence, there exist exactly six eigenvalue~$1,2, \dots, 6$ in the circle and we compute and verify these eigenvalues so that we set the numbers of parameters~$\ell = 3$ and $m = 2$.

Table~\ref{tbl:eigval_nearlysingular} gives the interval radii of the verified eigenvalues for the test problem with \eqref{eq:nearlysingular} for $\varepsilon = 10^{-s}$, $s = 1, 2, \dots, 16$.
Table~\ref{tbl:eigvec_nearlysingular} gives the maximum interval radii of the entries of the verified eigenvectors for the test problem with \eqref{eq:multiple} for $\varepsilon = 10^{-s}$, $s = 1, 2, \dots, 16$.
These tables show that the proposed method works and is robust even for nearly singular pencils.
Similar trends are observed for the Hankel matrix approach.

\subsection{Practical problem}
To show the verification performance of the proposed method, we test on a practical problem PPE354 obtained from \cite{HoshiImachiKuwataKakudaFujitaMatsui2019JJIAM}, whose size is of $354$ and coefficient matrix~$B$ is not diagonal.
Note that the preceding experiments are performed only on diagonal matrices~$B$.
The problem arises in an organic polymer poly-(phenylene-ethynylene) (PPE) in the para (linear-chain) structure with ten monomers or $120$ atoms.
There exist exactly ten eigenvalues in the interval~$[2.281, 2.428]$, and we compute and verify these eigenvalues so that we set the numbers of parameters~$\ell = 5$ and $m = 2$.
The input arguments of \texttt{eigs} are set to compute the ten eigenvalues closest to $2.3545$ on the real axis.

Table~\ref{tbl:eigval_practical} gives the interval radii of the verified eigenvalues for PPE354.
Table~\ref{tbl:eigvec_practical} gives the maximum interval radii of the entries of the verified eigenvectors for PPE354.
All the methods succeed in the verification.
These figures show that the Rayleigh--Ritz procedure approach gives slightly larger interval radii than the Hankel matrix approach.

\begin{figure}[H]
	\centering
	\begin{minipage}{0.44\hsize}
		\centering
		\includegraphics[width=\textwidth]{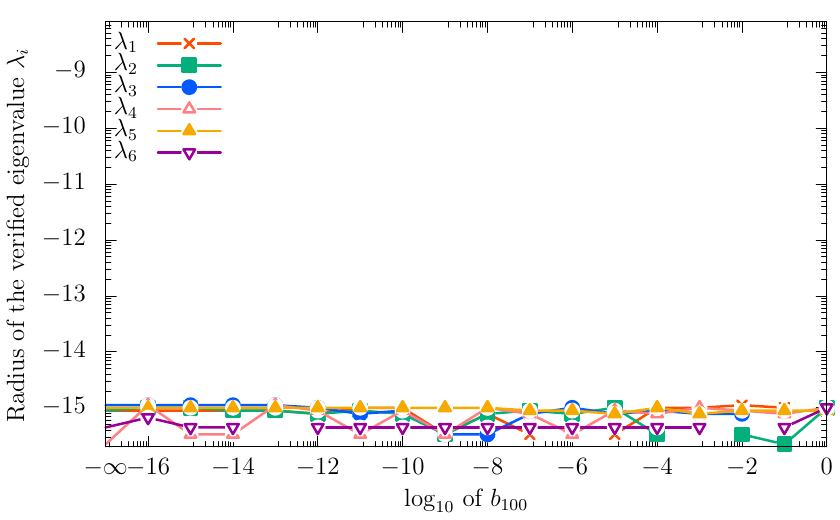}
		\subcaption{\texttt{eigs}+\texttt{verifyeig}.}
		\label{fig:rad_eigs}
	\end{minipage}
	\quad
	\begin{minipage}{0.44\hsize}
		\centering
		\includegraphics[width=\textwidth]{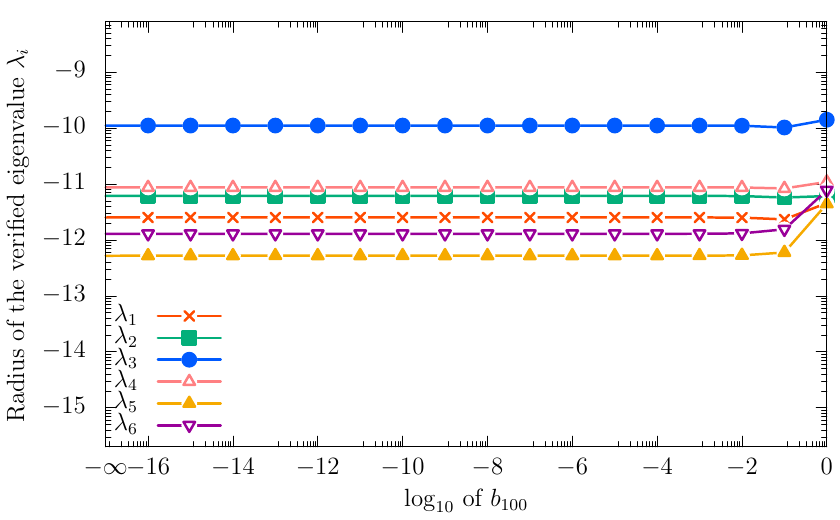}
		\subcaption{Hankel matrix approach.}
		\label{fig:rad_Hankel}
	\end{minipage}
	\begin{minipage}{0.44\hsize}
		\centering
		\includegraphics[width=\textwidth]{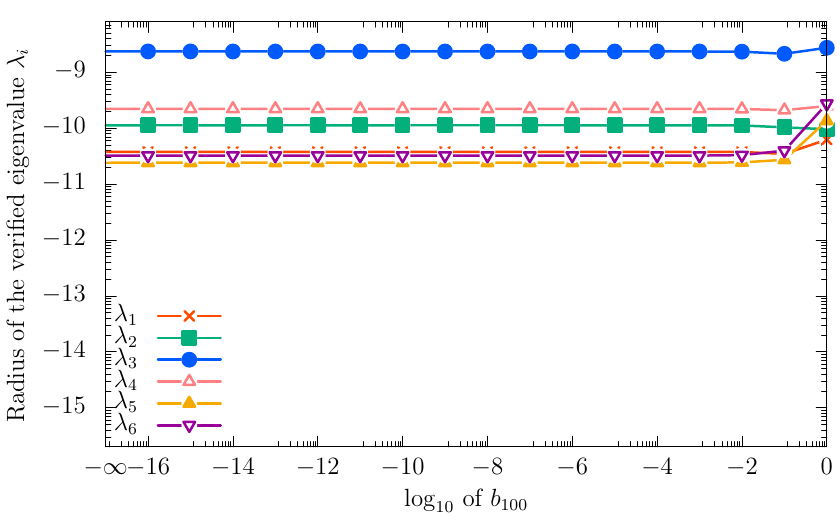}
		\subcaption{Rayleigh--Ritz procedure approach.}
		\label{fig:rad_RR}
	\end{minipage}
	\caption{Radii of the verified eigenvalues for the test problems with \eqref{eq:pentadiag} with ill-conditioned or semidefinite $B$.
		Each symbol represents an eigenvalue with the same index.}
	\label{fig:rad_eigval}
	\centering
	\begin{minipage}{0.44\hsize}
		\centering
		\includegraphics[width=\textwidth]{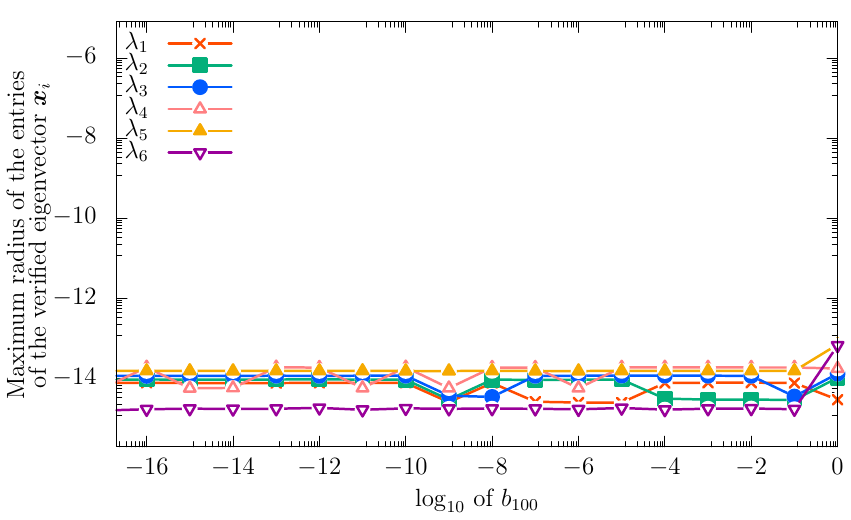}
		\subcaption{\texttt{eigs}+\texttt{verifyeig}.}
		\label{fig:rad_eigvec_eigs}
	\end{minipage}
	\quad
	\begin{minipage}{0.44\hsize}
		\centering
		\includegraphics[width=\textwidth]{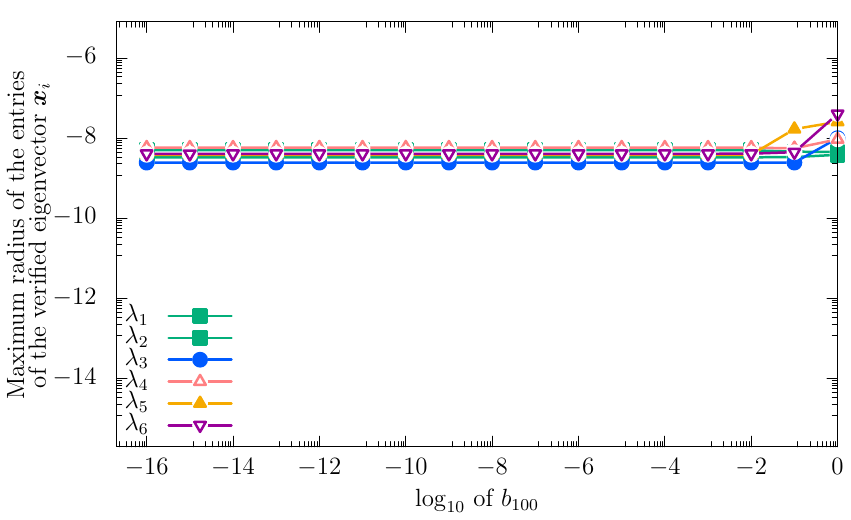}
		\subcaption{Hankel matrix approach.}
		\label{fig:rad_eigvec_Hankel}
	\end{minipage}
	\begin{minipage}{0.44\hsize}
		\centering
		\includegraphics[width=\textwidth]{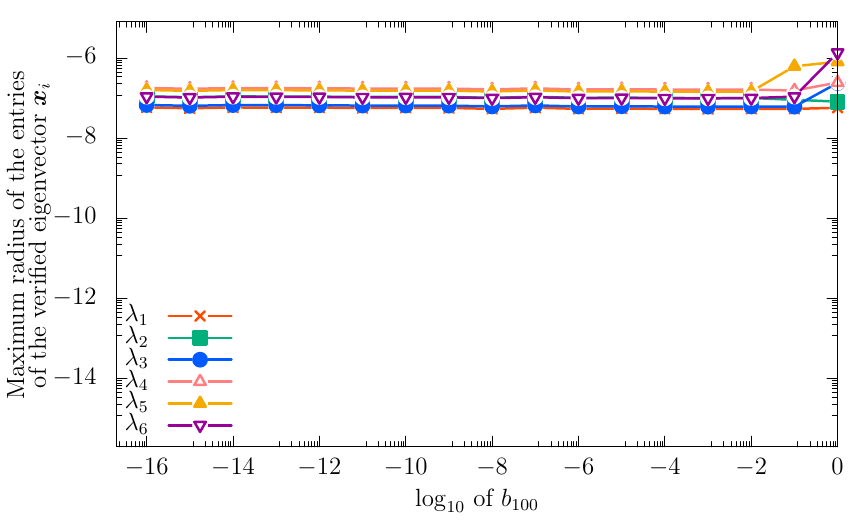}
		\subcaption{Rayleigh--Ritz procedure approach.}
		\label{fig:rad_eigvec_RR}
	\end{minipage}
	\caption{Maximum radius of the entries of the verified eigenvectors for the test problems with \eqref{eq:pentadiag} with ill-conditioned or semidefinite $B$.
		Each symbol represents an eigenvector corresponding to an eigenvalue with the same index.}
	\label{fig:rad_eigvec}
\end{figure}

\begin{table}[H]
	\caption{Interval radii of the verified eigenvalues for the test problems with nearly singular matrix pencils~\eqref{eq:nearlysingular}.}
	\label{tbl:eigval_nearlysingular}
	\scriptsize
	\centering
	\begin{tabular}{c|llllll}		
		& \multicolumn{6}{c}{true eigenvalue} \\
		$s$ & \multicolumn{1}{c}{$1$} & \multicolumn{1}{c}{$2$} & \multicolumn{1}{c}{$3$} & \multicolumn{1}{c}{$4$} & \multicolumn{1}{c}{$5$} & \multicolumn{1}{c}{$6$} \\
		\hline
		1 & \texttt{1.64e-11} & \texttt{2.61e-11} & \texttt{3.89e-11} & \texttt{5.84e-10} & \texttt{1.44e-11} & \texttt{1.27e-10} \\[1mm]
		2 & \texttt{1.64e-11} & \texttt{2.61e-11} & \texttt{3.89e-11} & \texttt{5.84e-10} & \texttt{1.44e-11} & \texttt{1.27e-10} \\[1mm]
		3 & \texttt{1.64e-11} & \texttt{2.61e-11} & \texttt{3.89e-11} & \texttt{5.84e-10} & \texttt{1.44e-11} & \texttt{1.27e-10} \\[1mm]
		4 & \texttt{1.64e-11} & \texttt{2.61e-11} & \texttt{3.89e-11} & \texttt{5.84e-10} & \texttt{1.44e-11} & \texttt{1.27e-10} \\[1mm]
		5 & \texttt{1.64e-11} & \texttt{2.61e-11} & \texttt{3.89e-11} & \texttt{5.84e-10} & \texttt{1.44e-11} & \texttt{1.27e-10} \\[1mm]
		6 & \texttt{1.64e-11} & \texttt{2.61e-11} & \texttt{3.89e-11} & \texttt{5.84e-10} & \texttt{1.44e-11} & \texttt{1.27e-10} \\[1mm]
		7 & \texttt{1.64e-11} & \texttt{2.61e-11} & \texttt{3.89e-11} & \texttt{5.84e-10} & \texttt{1.44e-11} & \texttt{1.27e-10} \\[1mm]
		8 & \texttt{1.64e-11} & \texttt{2.61e-11} & \texttt{3.89e-11} & \texttt{5.84e-10} & \texttt{1.44e-11} & \texttt{1.27e-10} \\[1mm]
		9 & \texttt{1.64e-11} & \texttt{2.61e-11} & \texttt{3.89e-11} & \texttt{5.84e-10} & \texttt{1.44e-11} & \texttt{1.27e-10} \\[1mm]
		10 & \texttt{1.64e-11} & \texttt{2.61e-11} & \texttt{3.89e-11} & \texttt{5.84e-10} & \texttt{1.44e-11} & \texttt{1.27e-10} \\[1mm]
		11 & \texttt{1.64e-11} & \texttt{2.61e-11} & \texttt{3.89e-11} & \texttt{5.84e-10} & \texttt{1.44e-11} & \texttt{1.27e-10} \\[1mm]
		12 & \texttt{1.64e-11} & \texttt{2.61e-11} & \texttt{3.89e-11} & \texttt{5.84e-10} & \texttt{1.44e-11} & \texttt{1.27e-10} \\[1mm]
		13 & \texttt{1.64e-11} & \texttt{2.61e-11} & \texttt{3.89e-11} & \texttt{5.84e-10} & \texttt{1.44e-11} & \texttt{1.27e-10} \\[1mm]
		14 & \texttt{1.64e-11} & \texttt{2.61e-11} & \texttt{3.89e-11} & \texttt{5.84e-10} & \texttt{1.44e-11} & \texttt{1.27e-10} \\[1mm]
		15 & \texttt{1.64e-11} & \texttt{2.61e-11} & \texttt{3.89e-11} & \texttt{5.84e-10} & \texttt{1.44e-11} & \texttt{1.27e-10} \\[1mm]
		16 & \texttt{1.64e-11} & \texttt{2.61e-11} & \texttt{3.89e-11} & \texttt{5.84e-10} & \texttt{1.44e-11} & \texttt{1.27e-10} \\
	\end{tabular}
	
	\caption{Maximum interval radii of the entries of the verified eigenvectors for the test problems with nearly singular matrix pencils~\eqref{eq:nearlysingular}.}
	\label{tbl:eigvec_nearlysingular}
	\scriptsize
	\centering
	\begin{tabular}{c|llllll}		
		$s$ & \multicolumn{1}{c}{$1$} & \multicolumn{1}{c}{$2$} & \multicolumn{1}{c}{$3$} & \multicolumn{1}{c}{$4$} & \multicolumn{1}{c}{$5$} & \multicolumn{1}{c}{$6$} \\
		\hline
		1 & \texttt{6.17e-11} & \texttt{2.33e-10} & \texttt{4.85e-10} & \texttt{2.13e-10} & \texttt{3.70e-10} & \texttt{4.50e-10} \\[1mm]
		2 & \texttt{5.81e-11} & \texttt{2.18e-10} & \texttt{4.54e-10} & \texttt{1.99e-10} & \texttt{3.45e-10} & \texttt{4.18e-10} \\[1mm]
		3 & \texttt{6.37e-11} & \texttt{2.41e-10} & \texttt{5.01e-10} & \texttt{2.20e-10} & \texttt{3.82e-10} & \texttt{4.64e-10} \\[1mm]
		4 & \texttt{6.05e-11} & \texttt{2.28e-10} & \texttt{4.73e-10} & \texttt{2.08e-10} & \texttt{3.60e-10} & \texttt{4.36e-10} \\[1mm]
		5 & \texttt{6.58e-11} & \texttt{2.48e-10} & \texttt{5.17e-10} & \texttt{2.27e-10} & \texttt{3.94e-10} & \texttt{4.78e-10} \\[1mm]
		6 & \texttt{6.68e-11} & \texttt{2.52e-10} & \texttt{5.25e-10} & \texttt{2.31e-10} & \texttt{4.01e-10} & \texttt{4.86e-10} \\[1mm]
		7 & \texttt{6.73e-11} & \texttt{2.54e-10} & \texttt{5.30e-10} & \texttt{2.33e-10} & \texttt{4.05e-10} & \texttt{4.89e-10} \\[1mm]
		8 & \texttt{6.34e-11} & \texttt{2.38e-10} & \texttt{4.96e-10} & \texttt{2.17e-10} & \texttt{3.76e-10} & \texttt{4.56e-10} \\[1mm]
		9 & \texttt{7.00e-11} & \texttt{2.64e-10} & \texttt{5.48e-10} & \texttt{2.41e-10} & \texttt{4.16e-10} & \texttt{5.06e-10} \\[1mm]
		10 & \texttt{6.74e-11} & \texttt{2.53e-10} & \texttt{5.25e-10} & \texttt{2.30e-10} & \texttt{3.98e-10} & \texttt{4.82e-10} \\[1mm]
		11 & \texttt{7.23e-11} & \texttt{2.73e-10} & \texttt{5.65e-10} & \texttt{2.48e-10} & \texttt{4.28e-10} & \texttt{5.21e-10} \\[1mm]
		12 & \texttt{7.32e-11} & \texttt{2.76e-10} & \texttt{5.71e-10} & \texttt{2.50e-10} & \texttt{4.32e-10} & \texttt{5.26e-10} \\[1mm]
		13 & \texttt{7.51e-11} & \texttt{2.82e-10} & \texttt{5.84e-10} & \texttt{2.56e-10} & \texttt{4.42e-10} & \texttt{5.38e-10} \\[1mm]
		14 & \texttt{7.62e-11} & \texttt{2.86e-10} & \texttt{5.91e-10} & \texttt{2.59e-10} & \texttt{4.47e-10} & \texttt{5.44e-10} \\[1mm]
		15 & \texttt{7.72e-11} & \texttt{2.90e-10} & \texttt{5.99e-10} & \texttt{2.62e-10} & \texttt{4.53e-10} & \texttt{5.51e-10} \\[1mm]
		16 & \texttt{7.42e-11} & \texttt{2.78e-10} & \texttt{5.73e-10} & \texttt{2.50e-10} & \texttt{4.32e-10} & \texttt{5.24e-10} \\ 
	\end{tabular}
\end{table}

\begin{table}[H]
	\caption{Interval radii of the verified eigenvalues for the practical problem~PPE354.}
	\label{tbl:eigval_practical}
	\scriptsize
	\centering
	\begin{tabular}{c|llllllllll}
		method & \multicolumn{1}{c}{$\lambda_1$} & \multicolumn{1}{c}{$\lambda_2$} & \multicolumn{1}{c}{$\lambda_3$} & \multicolumn{1}{c}{$\lambda_4$} & \multicolumn{1}{c}{$\lambda_5$} & \multicolumn{1}{c}{$\lambda_6$} & \multicolumn{1}{c}{$\lambda_7$}& \multicolumn{1}{c}{$\lambda_8$}& \multicolumn{1}{c}{$\lambda_9$}& \multicolumn{1}{c}{$\lambda_{10}$} \\
		\hline
		\texttt{eigs+verifyeig}  & \texttt{1.91e-14} & \texttt{1.91e-14} & \texttt{1.91e-14} & \texttt{1.96e-14} & \texttt{1.78e-14} & \texttt{1.91e-14} & \texttt{1.96e-14} & \texttt{2.14e-14} & \texttt{1.96e-14} & \texttt{2.05e-14} \\[1mm]
		Hankel & \texttt{4.96e-10} & \texttt{1.39e-09} & \texttt{4.88e-09} & \texttt{1.21e-09} & \texttt{7.40e-10} & \texttt{1.53e-08} & \texttt{1.25e-09} & \texttt{6.73e-09} & \texttt{9.85e-10} & \texttt{5.30e-10} \\[1mm]
		Rayleigh--Ritz & \texttt{6.66e-09} & \texttt{8.94e-09} & \texttt{3.63e-08} & \texttt{9.02e-09} & \texttt{5.85e-09} & \texttt{1.49e-07} & \texttt{1.50e-08} & \texttt{7.95e-08} & \texttt{7.32e-09} & \texttt{6.15e-09} \\
	\end{tabular}

	\caption{Maximum interval radii of the entries of the verified eigenvectors for the practical problem~PPE354.}
	\label{tbl:eigvec_practical}
	\scriptsize
	\centering
	\begin{tabular}{c|llllllllll}
		method & \multicolumn{1}{c}{$\lambda_1$} & \multicolumn{1}{c}{$\lambda_2$} & \multicolumn{1}{c}{$\lambda_3$} & \multicolumn{1}{c}{$\lambda_4$} & \multicolumn{1}{c}{$\lambda_5$} & \multicolumn{1}{c}{$\lambda_6$} & \multicolumn{1}{c}{$\lambda_7$}& \multicolumn{1}{c}{$\lambda_8$}& \multicolumn{1}{c}{$\lambda_9$}& \multicolumn{1}{c}{$\lambda_{10}$} \\
		\hline
		\texttt{eigs+verifyeig} & \texttt{3.85e-13} & \texttt{6.29e-13} & \texttt{1.13e-13} & \texttt{4.01e-13} & \texttt{5.04e-13} & \texttt{5.59e-13} & \texttt{1.27e-13} & \texttt{2.92e-13} & \texttt{1.21e-13} & \texttt{1.37e-13} \\[1mm] 
		Hankel & \texttt{6.36e-07} & \texttt{3.16e-06} & \texttt{1.14e-06} & \texttt{1.85e-06} & \texttt{2.90e-06} & \texttt{6.11e-07} & \texttt{6.87e-07} & \texttt{2.82e-07} & \texttt{1.94e-07} & \texttt{4.39e-07} \\[1mm] 
		Rayleigh--Ritz & \texttt{1.67e-05} & \texttt{1.35e-04} & \texttt{7.79e-05} & \texttt{1.20e-04} & \texttt{1.17e-04} & \texttt{8.14e-05} & \texttt{4.48e-05} & \texttt{4.38e-05} & \texttt{7.80e-06} & \texttt{1.31e-05} \\ 
	\end{tabular}
\end{table}

\section{Conclusions} \label{sec:conc}
We proposed a verified computation method using the Rayleigh--Ritz procedure and complex moments for eigenvalues in a region and the corresponding eigenvectors of generalized Hermitian eigenvalue problems.
We split the error in the approximated complex moment into the truncation error of the quadrature and rounding errors and evaluate each.
The proposed method uses the Rayleigh--Ritz procedure to project a given eigenvalue problem into a reduced one and can use half the number of quadrature points for our previous Hankel matrix approach to reduce truncation errors to the same order.
Moreover, the transformation matrix for the Rayleigh--Ritz procedure enables verification of the eigenvectors.
Numerical experiments showed that the proposed method is faster than previous methods while maintaining verification performance and works even for nearly singular matrix pencils and in the presence of multiple and nearly multiple eigenvalues.
The Rayleigh--Ritz procedures approach inherits several features from the Hankel matrix approach, such as an efficient technique to evaluate the solutions of linear systems and a parameter tuning technique for the number of quadrature points.
The proposed method will be potentially efficient when implemented in parallel.


\end{document}